\newtheorem{theorem}{Theorem}[section]
\newtheorem{thm}[theorem]{Theorem}
\newtheorem{definition}[theorem]{Definition}
\newtheorem{defn}[theorem]{Definition}
\newtheorem{prop}[theorem]{Proposition}
\newtheorem{cor}[theorem]{Corollary}
\newtheorem{remark}[theorem]{Remark}
\newtheorem{rem}[theorem]{Remark}
\newtheorem{example}[theorem]{Example}
\newtheorem{question}[theorem]{Question}
\newcommand{\cat}{\mathcal}
\newcommand{\R}{\mathbb R}
\newcommand{\Q}{\mathbb Q}
\newcommand{\Z}{\mathbb Z}
\newcommand{\N}{\mathbb N}
\newcommand{\C}{\mathbb C}
\newcommand{\T}{\mathbb T}
\newcommand{\K}{\mathbb K}
\newcommand{\CP}{\mathbb P}
\newcommand{\Xx}{{\cat X}}
\newcommand{\Nn}{{\cat N}}
\newcommand{\om}{{\omega}}
\newcommand{\rp}{{{\R\CP}\,\!}}
\DeclareMathOperator{\rank}{rank}
\DeclareMathOperator{\vol}{vol}
\DeclareMathOperator{\conv}{conv}
\DeclareMathOperator{\Aff}{Aff}
\DeclareMathOperator{\Chi}{{\displaystyle{\chi}}}
\def\om{{\omega}}
\def\sec{{\mathfrak s}}
\def\rs{{\mu_\text{RS}}}
\begin{document}
\title{On the mean Euler characteristic of Gorenstein toric contact manifolds}

\author[M.~Abreu]{Miguel Abreu}
\address{Center for Mathematical Analysis, Geometry and Dynamical Systems,
Instituto Superior T\'ecnico, Universidade de Lisboa, 
Av. Rovisco Pais, 1049-001 Lisboa, Portugal}
\email{mabreu@math.tecnico.ulisboa.pt}
 
\author[L.~Macarini]{Leonardo Macarini}
\address{Universidade Federal do Rio de Janeiro, Instituto de Matem\'atica,
Cidade Universit\'aria, CEP 21941-909 - Rio de Janeiro - Brazil}
\email{leomacarini@gmail.com}

\thanks{MA was partially funded by FCT/Portugal through UID/MAT/04459/2013 and 
project PTDC/MAT-GEO/1608/2014, and by the Institut Mittag-Leffler, Sweden, and 
CNPq/Brazil through visiting grants. LM was partially supported by CNPq/Brazil and by 
FCT/Portugal through a visiting grant. The present work is part of the authors activities 
within BREUDS, a research partnership between European and Brazilian research 
groups in dynamical systems, supported by an FP7 International Research Staff Exchange 
Scheme (IRSES) grant of the European Union.}

\date{March 13, 2018}

\begin{abstract}
We prove that the mean Euler characteristic of a Gorenstein toric contact manifold, i.e. a good
toric contact manifold with zero first Chern class, is equal to half the normalized volume of the 
corresponding toric diagram and give some applications. A particularly interesting one, obtained 
using a result of Batyrev and Dais, is the following: twice the mean Euler characteristic of a 
Gorenstein toric contact manifold is equal to the Euler characteristic of any crepant toric symplectic 
filling, i.e. any toric symplectic filling with zero first Chern class.
\end{abstract}

\keywords{toric contact manifolds; toric symplectic cones; equivariant symplectic homology; mean index;
mean Euler characteristic; volume of simplicial polytopes; Gorenstein toric isolated singularities;
crepant toric resolutions}

\subjclass[2010]{53D42 (primary), 53D20, 53D35 (secondary)}

\maketitle

\section{Introduction}
\label{s:intro}

Good toric contact manifolds are the odd dimensional analogues of closed toric symplectic
manifolds. As proved by Lerman in~\cite{Le1}, they can be classified by the associated moment 
cones, in the same way that Delzant's theorem classifies closed toric symplectic manifolds by 
the associated moment polytopes. 

In~\cite{AM} we showed that on any good toric contact manifold any non-degenerate toric contact 
form is even, i.e. all contractible closed orbits of its Reeb flow have even contact homology degree. 
As we will see in Proposition~\ref{prop:parity}, this is also true for the non-contractible closed Reeb 
orbits. The contact homology degree of a non-degenerate closed Reeb orbit is determined by an 
appropriate dimensional shift of its Conley-Zehnder index. More precisely, it is equal to the 
Conley-Zehnder index plus $n-2$, where throughout this paper contact manifolds have dimension 
$2n+1$. The Conley-Zehnder index is a well defined integer provided the first Chern class of 
the contact structure vanishes.

\begin{defn} \label{def:Gorenstein}
A \emph{Gorenstein toric contact manifold} $(M,\xi)$ is a good closed toric contact manifold with
zero first Chern class. For $\alpha$ a non-degenerate toric contact form on $(M,\xi)$ with 
corresponding Reeb vector field $R_\alpha$, the \emph{contact Betti numbers} $cb_j (M, \alpha)$, $j\in\Z$,
are defined by
\[
cb_j (M, \alpha) = \text{number of closed $R_\alpha$-orbits with contact homology degree $j$}
\]
and the \emph{mean Euler characteristic} $\Chi (M, \alpha)$ is defined by
\[
\Chi (M, \alpha) := \lim_{N\to\infty} \frac{1}{N} \sum_{j=-N}^N (-1)^j cb_{j} (M,\alpha) \,.
\]
\end{defn}
\begin{rem} \label{rem:Gorenstein}
The name Gorenstein is motivated by the fact that, in Algebraic Geometry, the
corresponding toric cones are also called Gorenstein toric isolated singularities,
the singular point being at the apex of the cone. Hence, good toric contact manifolds
with zero first Chern class are links of Gorenstein toric isolated singularities.
\end{rem}
\begin{rem} \label{rem:meanEuler}
As mentioned above, on a Gorenstein toric contact manifold $(M,\xi)$ any non-degenerate toric
contact form $\alpha$ is even. Moreover, cf. Section~\ref{s:proof}, it has only finitely many simple 
closed Reeb orbits and the $\inf \{j\in\Z \mid cb_j (M,\alpha) \ne 0\}$ is finite (cf. Remark~\ref{rem:positive}). 
Hence, we have that
\[
\Chi (M,\alpha) = \lim_{N\to\infty} \frac{1}{2N} \sum_{j=0}^N \dim cb_{2j} (M,\alpha)
\]
and this limit always exists (cf. \cite{GG}).
\end{rem}

\begin{rem} \label{rem:invariance1}
Each contact Betti number $cb_j (M, \alpha)$ should be a contact invariant of $(M,\xi)$, 
the rank of its degree $j$ cylindrical contact homology, and $\Chi (M, \alpha)$ should then also be a 
contact invariant of $(M,\xi)$, its mean Euler characteristic $\Chi (M, \xi)$ as defined in~\cite{vK}.
Unfortunately, and despite recent foundational developments (e.g.~\cite{P1,P2}), cylindrical contact 
homology has not been proved to be a well defined invariant in the presence of contractible closed
Reeb orbits, even in this restricted context of Gorenstein toric contact manifolds. 

However, for Gorenstein toric contact manifolds that have crepant (i.e. with zero first Chern class)  
toric symplectic fillings, which is the context of Theorem~\ref{thm:main3} below, we can use positive
equivariant symplectic homology to conclude that the
contact Betti numbers are indeed true contact invariants. This follows 
from the recent work of McLean-Ritter~\cite{MR}, which uses previous work by  
Kwon and van Koert~\cite{KvK}, as we will now briefly explain.

Let $W_0$ be a crepant toric symplectic filling of $(M,\xi)$ and consider its completion
\[
W:= W_0 \cup (M \times [1,\infty), d(r\alpha))\,, 
\]
where $\alpha$ is a non-degenerate toric contact form on $(M, \xi)$, with $R_\alpha$ as its Reeb 
vector field, and $r$ is the coordinate on $[1,\infty)$. Then $W$ is a convex symplectic manifold to 
which the methods and results of Appendix D and Appendix E of~\cite{MR} can be applied. This is 
not an exact symplectic manifold in general (crepant toric symplectic fillings typically have embedded 
symplectic $2$-spheres corresponding to compact edges of their moment map images) but, using a 
new filtration on the symplectic chain complex constructed in \cite{MR}, 
one can define its positive equivariant symplectic (co)homology $ESH^*_+(W)$ and consider its corresponding
Morse-Bott spectral sequence. The first page of this spectral sequence is computed in Appendix E of \cite{MR}
and in our context we have that:
\begin{itemize}
\item Each Morse-Bott submanifold $B_\tau \subset M$ of initial points of the closed $R_\alpha$-orbits of
period $\tau$ is a disjoint union of circles $B_{\tau,c}$, i.e. $B_\tau = \cup_c B_{\tau,c}$.
\item Lemma 7.1 of \cite{MR} applies since $R_\alpha$ is a toric Reeb vector field.
\item The $S^1$-equivariant (co)homology of each $B_{\tau,c}$ is the (co)homology of a point, hence 
non-trivial only in degree zero.
\item The $E_1$ page of the third spectral sequence of Corollary 7.2 of \cite{MR} lives in total degree $k$ that 
satisfies
\[
k = 2n-1 - \text{(contact homology degree of some closed $R_\alpha$-orbit).}
\]
(Notice the difference between our contact homology grading and the one used in \cite{MR}.)
Since the contact homology degree is even, $k$ is odd and the spectral sequence collapses in this page.
\item It follows that each contact Betti number $cb_j (M, \alpha)$ is the rank of $ESH_+^{2n-1-j} (W)$ as
a $\K$-vector space, where $\K$ is the (universal) Novikov field.
\item This shows that the rank of each $ESH_+^{2n-1-j} (W)$ does not depend on the filling $W_0$ and
therefore
\[
cb_j (M, \xi) := {\rm rank}_\K ESH_+^{2n-1-j} (W) = cb_j (M, \alpha)
\]
is a well-defined contact invariant for Gorenstein toric contact manifolds $(M, \xi)$ that have crepant toric 
symplectic fillings.
\end{itemize}
It follows that if $\alpha$ is any non-degenerate toric contact form on $(M,\xi)$ then
\begin{align*}
\chi(M,\xi) & := \lim_{N\to\infty} \frac{1}{2N} \sum_{j=0}^{N} {\rm rank}_\K ESH_+^{2n-1-2j} (W) = \chi(M,\alpha)
\end{align*}
is also a well-defined contact invariant for Gorenstein toric contact manifolds $(M, \xi)$ that admit crepant toric symplectic fillings.

Although it is well-known that all Gorenstein toric contact manifolds of dimensions $3$ and $5$ have crepant 
toric symplectic fillings, that is no longer the case in higher dimensions (see Section~\ref{s:resolutions}).
\end{rem}

\begin{rem} \label{rem:invariance2}
Note that Theorem~\ref{thm:main2} below does show that $\Chi (M, \alpha)$ depends only on the 
Gorenstein toric contact manifold $(M,\xi)$, via the volume of its toric diagram (see below), and not on the particular 
non-degenerate toric contact form $\alpha$ used to define it. Hence, for any Gorenstein toric contact manifold
$(M,\xi)$ (without assuming the existence of a crepant filling), we can still define the mean Euler characteristic $\Chi (M, \xi)$ as
\[
\Chi (M, \xi) := \Chi (M, \alpha)\,,\ \alpha = \text{non-degenerate toric contact form on $(M,\xi)$}
\]
and consider it as a {\bf toric} contact invariant. In joint work in preparation with Miguel Moreira, we will show that each 
contact Betti number $cb_j (M, \alpha)$ is also independent of $\alpha$ and can be combinatorically determined 
using the Erhart polynomial of the toric diagram of $(M,\xi)$.
\end{rem}

\subsection{Main Result} \label{ss:main}

Any $(2n+1)$-dimensional good toric contact manifold is completely determined
by a good cone $C$ in the dual of the Lie algebra of the $(n+1)$-dimensional acting torus 
(see Definition~\ref{def:good} and Theorem~\ref{thm:good}).
For a Gorenstein toric contact manifold, there exists a $\Z$-basis of that torus such that the 
defining normals of $C\subset\R^{n+1}$ are of the form (see Corollary~\ref{cor:c_1})
\[
\nu_j = (v_j, 1)\,,\ v_j \in \Z^n\,,\ j=1,\ldots,d\,.
\]
The integral simplicial convex polytope $D = \conv (v_1, \ldots, v_d) \subset \R^n$ is called 
a toric diagram (see Definition~\ref{def:diagram}) and to any such diagram there corresponds 
a unique Gorenstein toric contact manifold $(M_D, \xi_D)$ of dimension $2n+1$ 
(see Theorem~\ref{thm:diagram}). 

Any vector $\nu = (v,1)\in\R^{n+1}$, with $v$ in the interior of $D$, determines a suitably 
normalized toric contact form $\alpha_\nu$ and corresponding toric Reeb vector $R_\nu$ 
(see Subsection~\ref{ss:reeb}). Moreover, each facet $F_\ell$ of $D$ determines a particular
simple closed $R_\nu$-orbit $\gamma_\ell$. Denote by $v_{\ell_1}, \ldots, v_{\ell_n} \in \{v_1, \ldots, v_d\}$ 
the vertices of $F_\ell$. Note that any facet of $D$ is an $(n-1)$-dimensional integral simplex in $\R^n$, 
whose only integral points are its $n$ vertices.

Our main result is the following formula for the mean index
\[
\Delta (\gamma_\ell) := \lim_{N\to\infty} \frac{\mu_{\rm RS} (\gamma_\ell^N)}{N}\,,
\]
where $\mu_{\rm RS}$ denotes the Robbin-Salamon index (see Section \ref{s:trivialization} 
for a discussion about this index and the trivialization of the contact structure).

\begin{thm} \label{thm:main}
\[
\frac{1}{\Delta (\gamma_\ell)} = \frac{n! \vol ( S_{v,\ell})}{2}\,,
\]
where $S_{v,\ell} \subset \R^n$ denotes the convex hull of $\{v, v_{\ell_1}, \ldots, v_{\ell_n}\}$.
\end{thm}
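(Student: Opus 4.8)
The plan is to rewrite the stated volume identity as a statement about a single determinant of the toric data, and then to read off the two quantities that enter the mean index — the prime period of $\gamma_\ell$ and the sum of its transverse mean rotation numbers — directly from the moment-cone geometry. First I would record the elementary identity
\[
n!\,\vol(S_{v,\ell}) \;=\; |\det(\nu,\nu_{\ell_1},\dots,\nu_{\ell_n})|\,,
\]
where $\nu=(v,1)$ and $\nu_{\ell_i}=(v_{\ell_i},1)$ as in Corollary~\ref{cor:c_1}: subtracting the first column from each of the others and expanding the $(n+1)\times(n+1)$ determinant along the resulting bottom row $(1,0,\dots,0)$ gives $|\det(v_{\ell_1}-v,\dots,v_{\ell_n}-v)|$, which is exactly $n!$ times the Euclidean volume of the simplex with apex $v$ over the base $F_\ell$. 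Thus it suffices to prove $\Delta(\gamma_\ell)=2/|\det(\nu,\nu_{\ell_1},\dots,\nu_{\ell_n})|$.

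Next I would compute the prime period of $\gamma_\ell$. The orbit sits over the edge of the moment cone dual to $F_\ell$, and the isotropy subalgebra along it is $\ft_\ell=\mathrm{span}_\R(\nu_{\ell_1},\dots,\nu_{\ell_n})$. Using the good-cone condition (Definition~\ref{def:good}) I would complete $\nu_{\ell_1},\dots,\nu_{\ell_n}$ to a $\Z$-basis $(\nu_{\ell_1},\dots,\nu_{\ell_n},e)$ of $\Z^{n+1}$ and expand $\nu=\sum_k a_k\nu_{\ell_k}+b\,e$. The Reeb flow closes up precisely when $t\nu\in\ft_\ell+\Z^{n+1}$, i.e. when $tb\in\Z$, so the prime period is $T_\ell=1/|b|$; since the chosen basis has determinant $\pm1$, Cramer's rule gives $|b|=|\det(\nu,\nu_{\ell_1},\dots,\nu_{\ell_n})|$. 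Hence $T_\ell=1/\bigl(n!\,\vol(S_{v,\ell})\bigr)$, and the theorem is now equivalent to $\Delta(\gamma_\ell)=2T_\ell$, i.e. to showing that the transverse mean rotation numbers of $\gamma_\ell$ sum to $T_\ell$.

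For this last step I would invoke the local toric normal form near $\gamma_\ell$: its symplectic normal bundle is $\C^n$, split into $n$ complex lines on which the isotropy torus acts with weights $\beta_1,\dots,\beta_n$ dual to $\nu_{\ell_1},\dots,\nu_{\ell_n}$, and — read in the trivialization of $\xi|_{\gamma_\ell}$ fixed in Section~\ref{s:trivialization} — the linearized Reeb flow rotates the $k$-th line at mean rate $\langle\beta_k,\nu\rangle$, so that $\theta_k=T_\ell\langle\beta_k,\nu\rangle$. The Gorenstein condition enters through the covector $m$ with $\langle m,\nu_j\rangle=1$ for all $j$ (the "height one" datum of Corollary~\ref{cor:c_1}): one has $\langle\sum_k\beta_k,\nu_{\ell_i}\rangle=1=\langle m,\nu_{\ell_i}\rangle$, and the $c_1=0$ trivialization is exactly the one extending the weights so that $\sum_k\beta_k$ agrees with $m$ on all of $\ft$, equivalently $\langle m,e\rangle=0$. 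Therefore
\[
\sum_k\theta_k \;=\; T_\ell\,\Bigl\langle\textstyle\sum_k\beta_k,\,\nu\Bigr\rangle \;=\; T_\ell\,\langle m,\nu\rangle\;=\;T_\ell\,,
\]
since $\nu=(v,1)$ also satisfies $\langle m,\nu\rangle=1$. As $\nu$ lies in the interior of the Reeb cone the rates are positive, $\gamma_\ell$ is elliptic, and $\Delta(\gamma_\ell)=2\sum_k\theta_k=2T_\ell$, as required.

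I expect the genuine obstacle to be the middle claim of the third paragraph: matching the Robbin--Salamon mean index, computed in the $c_1=0$ trivialization of Section~\ref{s:trivialization}, to the rotation rates $\langle\beta_k,\nu\rangle$. This needs the explicit local model for the linearized return map over an edge, the identification of the isotropy weights as the dual basis, and — most delicately — the verification that the Gorenstein trivialization is precisely the one for which $\sum_k\beta_k=m$, i.e. $\langle m,e\rangle=0$; everything else is linear algebra. As a consistency check one can take $D$ to be the standard simplex (so $M_D=S^{2n+1}$) with $v$ its barycentre: the computation returns $\Delta(\gamma_\ell)=2(n+1)$ for the Hopf orbit, confirming that this trivialization, rather than the filling trivialization, is the operative one.
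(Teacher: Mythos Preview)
Your outline is sound and arrives at the same identity $\Delta(\gamma_\ell)=2/|b|$ with $|b|=n!\vol(S_{v,\ell})$, but the route differs from the paper's. The paper does not argue via the prime period and transverse rotation numbers; instead it fixes the specific completion $e=(\eta,1)$ (so $\langle m,e\rangle=1$, not $0$), quotes from~\cite[Section~5]{AM} the explicit Conley--Zehnder formula
\[
\mu_{\rm CZ}(\gamma^{kN})=2\Big(\sum_{j=1}^n\big\lfloor kN\,b_j/|b|\big\rfloor+kN\,b/|b|\Big)+n,
\]
derived there by lifting the orbit through the symplectic quotient to $\C^d$, and reads off $\Delta(\gamma)=2/|b|$ from $\sum_j b_j+b=1$, which is exactly your Gorenstein identity $\langle m,\nu\rangle=1$ written in those coordinates. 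The volume is then matched to $|b|$ by a separate $\Aff(n,\Z)$ reduction to $\eta=0$, $v_j=e_j$ and an elementary determinant. Your determinant/Cramer packaging of the volume and period is cleaner and coordinate-free, and your use of the covector $m$ makes the role of the Gorenstein hypothesis transparent; the cost is that your acknowledged ``genuine obstacle''---identifying the $c_1=0$ trivialization as the one with $\sum_k\beta_k=m$---is precisely the content the paper outsources to~\cite{AM} via the $\C^d$ lift. One small slip: the assertion that ``the rates are positive'' because $\nu$ lies in the Reeb cone is not justified (positivity holds for the coefficients in the basis of \emph{all} defining normals $\nu_1,\dots,\nu_d$, not for the $a_k$ in your $n$-normal basis), but this is harmless since $\Delta=2\sum_k\theta_k$ for a unitary linear flow regardless of signs.
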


Since $S_{v,1}\,\ldots,S_{v,m}$, where $m$ is the number of facets of $D$, give a (simplicial)
subdivision of $D$ (see Figure~\ref{fig0}), we have that
\[
\sum_{\ell=1}^m \frac{1}{\Delta (\gamma_\ell)} = \frac{n! \vol (D)}{2}\,.
\]
\begin{figure}[ht]
\includegraphics{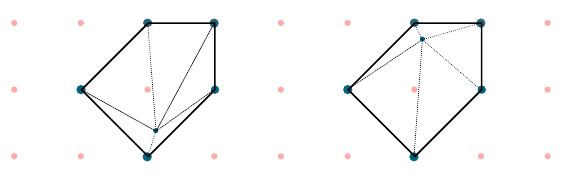}
\centering
\caption{Two toric Reeb vectors and corresponding (simplicial) subdivisions of a toric diagram.}
\label{fig0}
\end{figure}

Now, take $v = (r_1, \ldots, r_n)$ with $r_j$'s irrational and $\Q$-independent. Notice that this 
is a generic condition. Then $\alpha_\nu$ is non-degenerate and $\gamma_1,\ldots,\gamma_\ell$
are its only simple closed Reeb orbits (see Section~\ref{s:proof}). By applying the resonance relation 
\[
\sum_{\ell=1}^m \frac{1}{\Delta (\gamma_\ell)} =\Chi (M_D,\alpha_\nu)
\]
proved by Ginzburg and Kerman in~\cite{GK}, one gets the following very geometric formula
for the mean Euler characteristic of a Gorenstein toric contact manifold.
\begin{thm} \label{thm:main2}
Let $D\subset \R^n$ be a toric diagram and $(M_D,\xi_D)$ its corresponding Gorenstein
toric contact manifold. Then, for any non-degenerate toric contact form $\alpha_\nu$, with
$\nu$ in the interior of $D$, we have that
\[
\Chi (M_D,\alpha_\nu) = \frac{n! \vol (D)}{2}\,.
\]
\end{thm}
Taking into account Remark~\ref{rem:invariance1}, we get the following corollary.
\begin{cor} \label{cor:main2}
Let $D\subset \R^n$ be a toric diagram and $(M_D,\xi_D)$ its corresponding Gorenstein
toric contact manifold. If $(M_D,\xi_D)$ has a crepant toric symplectic filling, then 
\[
\Chi (M_D,\xi_D) = \frac{n! \vol (D)}{2}\,.
\]
In particular, $\vol (D)$ is a contact invariant of $(M_D,\xi_D)$.
\end{cor}

\subsection{Applications} \label{ss:app}

Using well-known facts about integral polytopes and their volumes, one gets the following
immediate applications of Theorem~\ref{thm:main2}.
\begin{cor} \label{cor:app1}
The mean Euler characteristic of a Gorenstein toric contact manifold is a half-integer.
\end{cor}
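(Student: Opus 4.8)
The plan is to reduce the statement to the classical fact that the normalized volume of a lattice polytope is a non-negative integer, and then invoke Theorem~\ref{thm:main2}. By that theorem,
\[
\Chi (M_D,\xi_D) = \frac{n!\,\vol (D)}{2}\,,
\]
so it suffices to prove that $n!\,\vol (D) \in \Z_{\geq 0}$; the half-integrality of $\Chi (M_D,\xi_D)$ is then immediate, since it exhibits the mean Euler characteristic as $\tfrac{1}{2}$ times an integer.

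First I would recall that $D = \conv (v_1,\ldots,v_d)$ is a lattice polytope, i.e.\ all of its vertices $v_j$ lie in $\Z^n$. Next I would triangulate $D$ into $n$-dimensional simplices whose vertices are among $v_1,\ldots,v_d$; such a lattice triangulation always exists, for instance a pulling triangulation induced by an ordering of the vertices. Writing $D = \bigcup_k \sigma_k$ with $\sigma_k = \conv (w^k_0, w^k_1, \ldots, w^k_n)$, $w^k_i \in \Z^n$, and pairwise disjoint interiors, each lattice simplex satisfies
\[
n!\,\vol (\sigma_k) = \bigl| \det \bigl( w^k_1 - w^k_0,\ \ldots,\ w^k_n - w^k_0 \bigr) \bigr|\,,
\]
and the right-hand side is a non-negative integer because the matrix has integer entries. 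Summing over $k$ gives $n!\,\vol (D) = \sum_k n!\,\vol (\sigma_k) \in \Z_{\geq 0}$, which completes the argument.

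There is no serious obstacle here: the only point requiring any care is the existence of a triangulation of $D$ into genuine lattice simplices, which is standard. I would emphasize, however, that the simplicial subdivision $S_{v,1},\ldots,S_{v,m}$ appearing just before Theorem~\ref{thm:main2} cannot be used for this purpose, since its common apex $v$ is chosen with irrational, $\Q$-independent coordinates, so the $S_{v,\ell}$ are not lattice simplices and their individual normalized volumes are generally irrational; one must instead triangulate $D$ using only its own lattice vertices.
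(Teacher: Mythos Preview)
Your proof is correct and follows the same approach as the paper: invoke Theorem~\ref{thm:main2} and the fact that the normalized volume $n!\,\vol(D)$ of a lattice polytope is an integer. The paper simply states this fact without justification, whereas you supply a standard triangulation argument; your closing caveat about the subdivision $S_{v,\ell}$ is a nice clarification but not needed for the argument itself.
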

\begin{proof}
The normalized volume of an integral polytope $P\subset\R^n$,  i.e. $n! \vol (P)$, is always an integer.
\end{proof}
\begin{cor} \label{cor:app2}
In any given odd dimension and for any fixed upper bound, there are only finitely many
Gorenstein toric contact manifolds with bounded mean Euler characteristic.
\end{cor}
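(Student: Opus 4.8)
The plan is to reduce the statement to a classical finiteness theorem for lattice polytopes. Fix the odd dimension $2n+1$ and an upper bound $B>0$. By Theorem~\ref{thm:main2}, for a Gorenstein toric contact manifold $(M_D,\xi_D)$ with toric diagram $D\subset\R^n$ one has $\Chi(M_D,\xi_D)=n!\vol(D)/2$, a quantity that is always positive. Hence the hypothesis $\Chi(M_D,\xi_D)\le B$ is equivalent to the volume bound $\vol(D)\le 2B/n!$, where $n$ is now fixed. Thus it suffices to show that only finitely many diagrams arise, up to the equivalence that identifies diagrams yielding isomorphic contact manifolds.

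First I would pin down that equivalence. By the classification recalled in Theorem~\ref{thm:good}, together with Corollary~\ref{cor:c_1} and Theorem~\ref{thm:diagram}, isomorphism classes of $(2n+1)$-dimensional Gorenstein toric contact manifolds are in bijection with toric diagrams $D\subset\R^n$ modulo the affine unimodular group $\mathrm{AGL}(n,\Z)=GL(n,\Z)\ltimes\Z^n$: changing the $\Z$-basis of the $(n+1)$-dimensional torus by a matrix $\left(\begin{smallmatrix}A&b\\0&1\end{smallmatrix}\right)$ with $A\in GL(n,\Z)$, $b\in\Z^n$, sends the defining normals $(v_j,1)$ to $(Av_j+b,1)$, i.e.\ acts on the vertices $v_j$ of $D$ by $v\mapsto Av+b$. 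Since the mean Euler characteristic descends to isomorphism classes (being built from cylindrical contact homology, a contact invariant), counting Gorenstein toric contact manifolds with $\Chi\le B$ amounts to counting $\mathrm{AGL}(n,\Z)$-equivalence classes of toric diagrams of volume at most $2B/n!$.

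Finally I would invoke the classical fact that, for each fixed $n$ and each bound $V$, there are only finitely many lattice polytopes in $\R^n$ of volume at most $V$, up to $\mathrm{AGL}(n,\Z)$-equivalence (Hensley; Lagarias--Ziegler). Since toric diagrams form a subclass of lattice polytopes—they are in addition integral simplicial polytopes whose facets are elementary $(n-1)$-simplices—finiteness for the full class immediately gives finiteness for diagrams of bounded volume, which completes the argument. The only genuinely nontrivial ingredient is this polytope finiteness theorem; everything else is a translation of definitions. The point worth emphasizing is that bounded volume does \emph{not} bound the diameter of a lattice polytope—long, thin simplices can have volume $1$—so the count is finite only after passing to $\mathrm{AGL}(n,\Z)$-equivalence, the unimodular shears straightening out such elongated polytopes; this is precisely the content of the Lagarias--Ziegler bound and the main obstacle one must rely on.
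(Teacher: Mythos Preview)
Your proof is correct and follows the same approach as the paper: reduce via Theorem~\ref{thm:main2} to the finiteness of lattice polytopes of bounded volume up to $\mathrm{AGL}(n,\Z)$-equivalence. Your treatment is in fact more careful than the paper's one-line proof, since you make the equivalence relation explicit and cite the relevant Hensley/Lagarias--Ziegler theorem, whereas the paper simply asserts the finiteness without further comment.
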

\begin{proof}
In any given dimension there are only finitely many integral polytopes with a fixed volume upper
bound.
\end{proof}
\begin{rem} \label{rem:apps}
These corollaries reflect the very special nature of toric contact structures. For example,
one might want to compare them with the following result of Kwon and van Koert~\cite{KvK}: 
given any rational number $x$, there is a Stein fillable contact structure $\xi_x$ on the 
$5$-sphere whose mean Euler characteristic equals $x$.
\end{rem}

Batyrev-Dais show in~\cite[Corollary 4.6]{BD} that the Euler characteristic of any crepant (i.e. 
with zero first Chern class) toric smooth resolution of a Gorenstein toric isolated singularity is 
equal to the normalized volume of the corresponding toric diagram. Since any crepant toric 
symplectic filling of a Gorenstein toric contact manifold gives rise to a crepant toric smooth 
resolution of the corresponding Gorenstein toric isolated singularity, we have the following very 
interesting application of Corollary~\ref{cor:main2}. It provides 
a relation between invariants of the contact structure and the topology of the corresponding toric 
symplectic filling.

\begin{thm} \label{thm:main3}
Twice the mean Euler characteristic of a Gorenstein toric contact manifold is equal to the
Euler characteristic of any crepant toric symplectic filling.
\end{thm}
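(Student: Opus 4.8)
The plan is to chain Theorem~\ref{thm:main2} with the Batyrev-Dais formula, using the correspondence between toric symplectic fillings and toric algebraic resolutions. Fix a Gorenstein toric contact manifold $(M_D,\xi_D)$ with toric diagram $D\subset\R^n$. Theorem~\ref{thm:main2} immediately yields
\[
2\,\Chi(M_D,\xi_D)=n!\,\vol(D)\,,
\]
so the task reduces to showing that the ordinary Euler characteristic of any crepant toric symplectic filling $W$ equals the normalized volume $n!\,\vol(D)$.

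First I would invoke the Batyrev-Dais result~\cite[Corollary 4.6]{BD}, which states that the Euler characteristic of any crepant toric smooth resolution $X$ of the Gorenstein toric isolated singularity associated to $D$ equals $n!\,\vol(D)$. The heart of the argument is then to identify $W$ with such a resolution. As indicated in the paragraph preceding the statement, a crepant toric symplectic filling of $(M_D,\xi_D)$ gives rise to a crepant toric smooth resolution $X$: both are toric manifolds encoded by the same combinatorial datum, namely a unimodular lattice triangulation of the diagram $D$ (equivalently, a smooth simplicial refinement of the moment cone $C$). In this dictionary the vanishing of the first Chern class---the crepant condition---translates in both settings into the requirement that every generating ray of the refinement pass through a lattice point on the hyperplane $\{x_{n+1}=1\}$, i.e. through a lattice point of $D$. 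Since the Euler characteristic is a diffeomorphism invariant and $W$ and $X$ share the same underlying smooth toric manifold, it follows that $\chi(W)=\chi(X)$, and hence
\[
\chi(W)=\chi(X)=n!\,\vol(D)=2\,\Chi(M_D,\xi_D)\,.
\]

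The hard part will be making this bridge between the symplectic and algebraic pictures fully rigorous: one must verify that a crepant toric symplectic filling really does produce a crepant toric smooth resolution with the same underlying topology, so that their Euler characteristics agree. Concretely, this means matching the Delzant-type combinatorial presentation of the filling with the fan presentation of the resolution and checking that the two a priori different notions of \emph{crepant} coincide under this match. Once this identification is secured, the theorem follows by direct substitution, with no further computation required.
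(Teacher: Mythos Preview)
Your proposal is correct and follows essentially the same approach as the paper: combine Theorem~\ref{thm:main2} with the Batyrev--Dais formula via the observation that a crepant toric symplectic filling gives rise to a crepant toric smooth resolution of the corresponding Gorenstein singularity. The paper presents this as a one-paragraph deduction in the introduction rather than a standalone proof, and in fact you spell out the combinatorial bridge (unimodular triangulations of $D$, matching of the two crepancy conditions) more explicitly than the paper does.
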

\noindent In Section~\ref{s:resolutions} we illustrate this theorem with some families of examples 
of crepant toric symplectic fillings of Gorenstein toric contact manifolds. Note that in dimensions
higher than $5$ there are Gorenstein toric contact manifolds with no crepant toric symplectic filling.
In fact, as we show at the end of Section~\ref{s:resolutions}, the real projective spaces $(\rp^{4n+3}, 
\xi_{\rm std})$, $n\in\N$, are examples of that. 

\begin{remark}
As far as we know, it is unknown whether or not $(\rp^{4n+3}, \xi_{\rm std})$, $n\in\N$,
have (necessarily non-toric) symplectic fillings with zero first Chern class.
\end{remark}

\begin{remark}
Theorem~\ref{thm:main3} is not true if the filling is not toric. Indeed, as discussed in Example \ref{ex:S2xS3}, there
exists a family of toric contact structure $\xi_p$ on $S^2 \times S^3$, with $p \in \N$, such that $\xi_1$ can be
identified with the standard contact structure on the unit cosphere bundle of $S^3$ (see Section \ref{s:dim5}). 
The mean Euler characteristic of $\xi_1$ is equal to one, but clearly the Euler characteristic of the filling given 
by the unit disk bundle in $T^*S^3$ vanishes. As in Remark \ref{rem:apps}, this reflects the rigid nature of toric 
structures.
\end{remark}

Cho-Futaki-Ono show in~\cite{CFO}  that there exists an infinite family of inequivalent
toric Sasaki-Einstein metrics on $\#_{k} S^2 \times S^3$ for each $k\in\N$.  The invariant
they use to prove their inequivalence is precisely the volume of the corresponding toric
diagrams. It then follows from Theorem~\ref{thm:main2} and Remark~\ref{rem:invariance1} that the underlying 
Gorenstein toric contact structures are also distinct \emph{as contact structures}. Hence we have the following
result.
\begin{cor} \label{cor:app3}
For each $k\in\N$, there are infinitely many non-equivalent contact structures on $\#_{k} S^2 \times S^3$
in the unique homotopy class of almost contact strictures determined by the vanishing of the first Chern class. 
These contact structures are toric and can be distinguished by their mean Euler characteristic.
\end{cor}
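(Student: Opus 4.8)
The plan is to assemble three ingredients: the Cho–Futaki–Ono family of examples, Theorem~\ref{thm:main2}, and the fact that the mean Euler characteristic is a contactomorphism invariant. First I would invoke~\cite{CFO} to fix, for each $k\in\N$, an infinite sequence of toric diagrams $D_1, D_2, \ldots \subset \R^2$ whose associated Gorenstein toric contact $5$-manifolds $(M_{D_i}, \xi_{D_i})$ are all diffeomorphic to $\#_{k} S^2 \times S^3$ and whose normalized volumes $2!\,\vol (D_i)$ are pairwise distinct; this is precisely the invariant that Cho–Futaki–Ono use to distinguish their toric Sasaki–Einstein metrics. Each underlying contact structure is indeed Gorenstein, since a Sasaki–Einstein structure has a Calabi–Yau metric cone, so $c_1 (\xi_{D_i}) = 0$, and the contact manifolds are good and toric by construction. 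Hence Theorem~\ref{thm:main2} applies and, with $n=2$, yields
\[
\Chi (M_{D_i}, \xi_{D_i}) = \frac{2!\,\vol (D_i)}{2} = \vol (D_i)\,,
\]
so the mean Euler characteristics are pairwise distinct.

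Next I would use that cylindrical contact homology, and therefore $\Chi$, depends only on the contact structure $\xi_{D_i}$ up to contactomorphism. Distinct values of $\Chi (M_{D_i}, \xi_{D_i})$ then force the $\xi_{D_i}$ to be pairwise non-contactomorphic, thereby upgrading the Cho–Futaki–Ono distinction from one of Sasaki–Einstein structures to one of bare contact structures. This already produces, for each $k$, infinitely many inequivalent toric contact structures on $\#_{k} S^2 \times S^3$, separated by $\Chi$.

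It remains to place all the $\xi_{D_i}$ in a single homotopy class of almost contact structures, namely the one determined by $c_1 = 0$. The manifold $\#_{k} S^2 \times S^3$ is simply connected and spin, so $w_2 = 0$. An almost contact structure is a reduction of the structure group of $TM$ to $U(2)\times 1$, i.e. a section of a bundle with fibre $SO(5)/U(2)\cong\C\CP^3$; since $\pi_2 (\C\CP^3) = \Z$ carries $c_1$ while $\pi_3 (\C\CP^3) = \pi_4 (\C\CP^3) = \pi_5 (\C\CP^3) = 0$, the difference obstructions between two such sections agreeing on the $2$-skeleton lie in $H^{i}(\#_{k} S^2 \times S^3; \pi_i (\C\CP^3)) = 0$ for $i = 3,4,5$. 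Thus the homotopy class of an almost contact structure on this $5$-manifold is pinned down by $c_1\in H^2$, and $w_2 = 0$ makes the class $c_1 = 0$ nonempty and unique; as all the $\xi_{D_i}$ have $c_1 = 0$, they lie in this single class.

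The main obstacle, and the genuinely new point over~\cite{CFO}, is the middle step: one must know that the volume of the toric diagram — a priori a Riemannian/combinatorial datum attached to the Sasaki–Einstein structure — is in fact encoded in an invariant of the contact structure alone. This is exactly the content of Theorem~\ref{thm:main2}, so once that theorem is in hand the corollary follows by the bookkeeping above, the only remaining care being the obstruction-theoretic verification that $c_1 = 0$ determines a unique almost contact homotopy class on $\#_{k} S^2 \times S^3$.
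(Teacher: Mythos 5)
Your proposal is correct and follows essentially the same route as the paper: the Cho--Futaki--Ono examples combined with Theorem~\ref{thm:main2} and the contactomorphism invariance of $\Chi$ is exactly the argument given in the paragraph preceding the corollary. The only addition is your obstruction-theoretic verification (via $\pi_i(SO(5)/U(2))\cong\pi_i(\C\CP^3)$ for $i\le 5$) that $c_1=0$ singles out a unique homotopy class of almost contact structures on $\#_k S^2\times S^3$, a point the paper asserts in the statement but leaves implicit; your argument for it is sound.
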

\begin{rem} \label{rem:app3}
In the case $k=1$, i.e. $S^2 \times S^3$, these contact structures were first considered in~\cite{GW1}, 
underlying new Sasaki-Einstein metrics, and proved to have non-isomorphic cylindrical contact homology
in~\cite{AM}.
\end{rem}
\noindent In Section~\ref{s:dim5} we give one explicit description for toric diagrams associated to these 
Gorenstein toric contact structures, having also in mind the following natural question. 
\begin{question} \label{q:app3}
For each $k\in\N$, what is the minimal mean Euler characteristic of a Gorenstein toric contact structure
on $\#_{k} S^2 \times S^3$ ?
\end{question}
\noindent Lerman showed in~\cite{Le2} that the second homotopy group of any good toric contact manifold of
dimension $2n+1$ is a free abelian group of rank equal to $d-n-1$, where $d$ is the number of facets of
the corresponding good moment cone $C\subset\R^{n+1}$. It is then natural to reformulate and generalize
Question~\ref{q:app3} in the following purely combinatorial terms.
\begin{question} \label{q:appn}
Given $d,n\in\N$ with $d\geq n+1$, what is the minimal volume of a toric diagram $D\subset\R^n$ with
$d$ vertices?
\end{question}
\noindent When $n=2$ and $3\leq d \leq 16$ the answer is known (see~\cite{C}, where you can also find the
answer for a few higher values of $d$). Moreover, in this dimension it is possible to obtain a general bound 
using
\begin{itemize}
\item[1)] Pick's formula for the volume of a lattice polygon in $\R^2$ with $g$ interior lattice
points and $b$ lattice points on the boundary:
\[
\vol = g + \frac{b}{2} -1\,;
\] 
\item[2)] Coleman's conjecture, proved in~\cite{KO} (see also~\cite{C}), stating that for such a lattice $d$-gon
one has
\[
b \leq 2g + 10 - d \,.
\]
\end{itemize}
Since $d=b$ for toric diagrams $D\subset\R^2$, we get the inequality
\[
\vol (D) \geq \frac{3(d-4)}{2}\,.
\]
This is far from optimal, e.g. gives $18$ as the lower bound for the volume of a $16$-gon toric diagram while
its minimal volume is known to be $59$ (within the family of $16$-gons described in Section~\ref{s:dim5} the 
minimal volume is $63$). In any case, it can be combined with Theorem~\ref{thm:main2} to give a
(very) partial answer to Question~\ref{q:app3}.
\begin{cor} \label{cor:app4}
Let $(M,\xi)$ be a Gorenstein toric $5$-manifold. Then
\[
\Chi (M,\xi) \geq \frac{3}{2} (\rank(\pi_2 (M)) - 1)\,.
\]
In particular,
\[
\Chi (\#_{k} S^2 \times S^3,\xi) \geq \frac{3}{2} (k - 1)\,.
\]
\end{cor}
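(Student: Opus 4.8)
The plan is to combine Theorem~\ref{thm:main2} with the volume bound derived immediately above and Lerman's computation of $\pi_2$, after which the statement becomes essentially bookkeeping. Since $(M,\xi)$ is a Gorenstein toric $5$-manifold, it is of the form $(M_D,\xi_D)$ for a toric diagram $D\subset\R^2$, i.e. the case $n=2$. First I would specialize Theorem~\ref{thm:main2} to this dimension, obtaining
\[
\Chi(M,\xi)=\frac{2!\,\vol(D)}{2}=\vol(D)\,,
\]
so that a lower bound on $\Chi(M,\xi)$ is exactly a lower bound on the Euclidean area of the lattice polygon $D$.

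Next I would feed in the inequality $\vol(D)\geq\frac{3(d-4)}{2}$ established just before the statement, where $d$ denotes the number of vertices of $D$; recall this came from Pick's formula combined with Coleman's conjecture and the identity $b=d$, which holds because each edge of a toric diagram in $\R^2$ contains no interior lattice points. To turn the combinatorial quantity $d$ into a topological one, I would apply Lerman's theorem: for a good toric contact manifold of dimension $2n+1$, $\pi_2(M)$ is free abelian of rank $d-n-1$, which here gives $\rank(\pi_2(M))=d-3$, i.e. $d=\rank(\pi_2(M))+3$. Substituting yields
\[
\Chi(M,\xi)=\vol(D)\geq\frac{3(d-4)}{2}=\frac{3\bigl(\rank(\pi_2(M))-1\bigr)}{2}\,,
\]
which is the first asserted inequality.

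For the particular case $M=\#_{k} S^2\times S^3$ I would simply evaluate the rank of $\pi_2$. The manifold $S^2\times S^3$ is simply connected with $H_2=\Z$, and a connected sum of finitely many simply connected manifolds of dimension $\geq 3$ is again simply connected with $H_2$ the direct sum of the summands' second homology; hence $\#_{k} S^2\times S^3$ is simply connected with $H_2=\Z^k$, and by the Hurewicz theorem $\pi_2\cong H_2$, so $\rank(\pi_2)=k$. Inserting this into the first inequality gives $\Chi(\#_{k} S^2\times S^3,\xi)\geq\frac{3}{2}(k-1)$.

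I do not anticipate a substantive obstacle: all the hard analytic and combinatorial input is already isolated in Theorem~\ref{thm:main2}, the Pick/Coleman area estimate, and Lerman's rank formula. The only points demanding minor care are the matching of the two meanings of $d$ (the number of vertices of $D$ versus the number of facets of the moment cone, which coincide in this setup) and the elementary homology computation identifying $\rank(\pi_2(\#_{k} S^2\times S^3))$ with $k$.
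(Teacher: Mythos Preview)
Your proposal is correct and follows exactly the route the paper intends: the corollary is stated without a separate proof because the ingredients---Theorem~\ref{thm:main2} specialized to $n=2$, the Pick/Coleman bound $\vol(D)\geq\frac{3(d-4)}{2}$, and Lerman's identification $\rank(\pi_2(M))=d-n-1=d-3$---are all laid out in the preceding paragraphs, and you have assembled them just as the paper expects. Your remark about matching the two uses of $d$ (vertices of $D$ versus facets of $C$) and your Hurewicz computation for $\#_k S^2\times S^3$ are appropriate supplementary details.
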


\subsection*{Acknowledgements} 
We are grateful to Leonor Godinho for pointing our attention to~\cite{C} and for helping us identify
real projective spaces $(\rp^{4n+3}, \xi_{\rm std})$, $n\in\N$, as examples of Gorenstein toric 
contact manifolds with no crepant toric symplectic filling (see the end of Section~\ref{s:resolutions}). 
During the 2015-16 academic year, Miguel Abreu benefited from the hospitality of the following institutions 
in addition to his own: Institut Mittag-Leffler (Sweden), Instituto de Matem\'atica Pura e Aplicada (Brazil) 
and Universidade Federal do Rio de Janeiro (Brazil).

\section{Gorenstein toric contact manifolds} 
\label{s:toric}

In this section we provide the necessary information on Gorenstein toric contact manifolds. For further details 
we refer the interested reader to~\cite{Le1} and~\cite{AM}.

\subsection{Toric symplectic cones}
\label{ss:cones}

Via symplectization, there is a $1$-$1$ correspondence between co-oriented contact manifolds
and symplectic cones, i.e. triples $(W,\om,X)$ where $(W,\om)$ is a connected symplectic manifold
and $X$ is a vector field, the Liouville vector field, generating a proper $\R$-action
$\rho_t:W\to W$, $t\in\R$, such that $\rho_t^\ast (\om) = e^{t} \om$. A closed symplectic cone is a 
symplectic cone $(W,\om,X)$ for which the corresponding contact manifold $M = W/\R$ is closed.

A toric contact manifold is a contact manifold of dimension $2n+1$ equipped with an effective Hamiltonian
action of the standard torus of dimension $n+1$: $\T^{n+1} = \R^{n+1} / 2\pi\Z^{n+1}$. Also via symplectization,
toric contact manifolds are in $1$-$1$ correspondence with toric symplectic cones, i.e. symplectic cones
$(W,\om,X)$ of dimension $2(n+1)$ equipped with an effective $X$-preserving Hamiltonian $\T^{n+1}$-action,
with moment map $\mu : W \to \R^{n+1}$ such that $\mu (\rho_t (w)) = e^{t} \mu (w)$, for all $w\in W$ and $t\in\R$.
Its moment cone is defined to be $C:= \mu(W) \cup \{ 0\} \subset \R^{n+1}$.

A toric contact manifold is {\it good} if its toric symplectic cone has a moment cone with the following properties.
\begin{definition} \label{def:good}
A cone $C\subset\R^{n+1}$ is \emph{good} if it is strictly convex and there exists a minimal set 
of primitive vectors $\nu_1, \ldots, \nu_d \in \Z^{n+1}$, with 
$d\geq n+1$, such that
\begin{itemize}
\item[(i)] $C = \bigcap_{j=1}^d \{x\in\R^{n+1}\mid 
\ell_j (x) := \langle x, \nu_j \rangle \geq 0\}$.
\item[(ii)] Any codimension-$k$ face of $C$, $1\leq k\leq n$, 
is the intersection of exactly $k$ facets whose set of normals can be 
completed to an integral basis of $\Z^{n+1}$.
\end{itemize}
The primitive vectors $\nu_1, \ldots, \nu_d \in \Z^{n+1}$ are called the defining normals of the good cone $C\subset\R^{n+1}$.
\end{definition}
The analogue for good toric contact manifolds of Delzant's classification theorem for closed toric
symplectic manifolds is the following result (see~\cite{Le1}).
\begin{theorem} \label{thm:good}
For each good cone $C\subset\R^{n+1}$ there exists a unique closed toric symplectic cone
$(W_C, \om_C, X_C, \mu_C)$ with moment cone $C$.
\end{theorem}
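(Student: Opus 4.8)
The plan is to adapt Delzant's construction to the conical setting: realize $W_C$ as a symplectic reduction of $\C^d$ by the subtorus cut out by the defining normals of $C$, and then establish uniqueness by an equivariant Moser argument. Throughout, condition (i) of Definition~\ref{def:good} is what makes the construction work at the level of $\R$-vector spaces, while the smoothness condition (ii) is precisely what guarantees that the reduced space is a smooth manifold on which the residual torus acts effectively.

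For existence, let $\nu_1,\dots,\nu_d$ be the defining normals of $C$ and let $\beta\colon\R^d\to\R^{n+1}$ be the linear map with $\beta(e_j)=\nu_j$. Condition (i) forces the $\nu_j$ to span, so $\beta$ is onto and $\fk:=\ker\beta$ has dimension $d-n-1$; condition (ii) upgrades $\beta$ to a surjection $\Z^d\to\Z^{n+1}$, so the induced homomorphism $\T^d\to\T^{n+1}$ is surjective with kernel a subtorus $K\subset\T^d$ integrating $\fk$. I would equip $\C^d$ with the standard form $\om_0=\sum_j dx_j\wedge dy_j$, the radial Liouville field $X_0=\tfrac12\sum_j(x_j\p_{x_j}+y_j\p_{y_j})$, and the $\T^d$-moment map $\phi(z)=\tfrac12(|z_1|^2,\dots,|z_d|^2)$, and reduce by $K$ at level zero, setting $W_C:=\bigl(\phi_K^{-1}(0)\setminus\{0\}\bigr)/K$, where $\phi_K=\iota^\ast\circ\phi$ and $\iota\colon\fk\hookrightarrow\R^d$. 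Since the Liouville flow $\rho_t$ scales $\phi$ by $e^t$ it preserves $\phi_K^{-1}(0)$, so $X_0$ descends to a Liouville field $X_C$ and $\om_0$ descends to a form $\om_C$ with $\rho_t^\ast\om_C=e^t\om_C$; thus $(W_C,\om_C,X_C)$ is a symplectic cone. The key point is that $K$ acts freely on $\phi_K^{-1}(0)\setminus\{0\}$: the stabilizer of a point is governed by the subset of vanishing coordinates $z_j$, i.e.\ by the normals of the face of $C$ to which it maps, and condition (ii) says exactly that these normals extend to an integral basis of $\Z^{n+1}$, which forces the corresponding isotropy subtorus to meet $K$ trivially. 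Hence $W_C$ is a smooth manifold.

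To identify the moment cone, note that on $\phi_K^{-1}(0)$ one has $\iota^\ast\phi(z)=0$, so $\phi(z)$ annihilates $\fk$ and therefore equals $\beta^\ast\mu_C(z)$ for a unique $\mu_C(z)\in(\R^{n+1})^\ast$; this $\mu_C$ descends to $W_C$ and is the moment map for the residual $\T^{n+1}=\T^d/K$ action. The defining inequalities are then automatic: $\langle\mu_C(z),\nu_j\rangle=\phi_j(z)=\tfrac12|z_j|^2\ge0$ for every $j$, so $\mu_C(W_C)\subset C$, and conversely every point of $C$ is attained, giving moment cone exactly $C$. Strict convexity of $C$ (part of goodness) ensures that the intersection of $\phi_K^{-1}(0)$ with a sphere in $\C^d$ is compact, hence that the link $M=W_C/\R$ is closed.

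For uniqueness I would argue that the moment cone determines the toric symplectic cone up to equivariant cone-symplectomorphism. Given two such cones with moment cone $C$, one first matches their orbit structure using the equivariant Darboux theorem: over the interior of $C$ the moment map is a principal $\T^{n+1}$-bundle admitting action-angle coordinates, and over each face the local normal form near an orbit is pinned down by condition (ii). Patching these normal forms produces a $\T^{n+1}$-equivariant diffeomorphism intertwining the moment maps, which one then deforms to a symplectomorphism by an equivariant Moser argument. The main obstacle is this last step: one must run Moser while simultaneously respecting the torus symmetry \emph{and} the Liouville/cone structure, equivalently carrying it out on the compact contact link rather than on the noncompact cone, so that the interpolating isotopy preserves conicity. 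Verifying existence is comparatively routine once condition (ii) is in hand; it is the global gluing and the cone-compatible Moser step that require the most care.
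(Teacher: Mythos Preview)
Your existence argument via Delzant-style symplectic reduction is exactly what the paper has in mind: the paper does not prove Theorem~\ref{thm:good} itself but cites Lerman~\cite{Le1} and records only the reduction construction, defining $K$ by formula~\eqref{eq:defK} and presenting $W_C$ as the quotient of the zero level in $\C^d\setminus\{0\}$ by $K$. Your uniqueness sketch via local normal forms and an equivariant Moser isotopy carried out on the compact contact link is the standard route (and essentially Lerman's); the paper says nothing about uniqueness beyond the citation.

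One correction is needed. Condition~(ii) does \emph{not} force $\beta\colon\Z^d\to\Z^{n+1}$ to be surjective, so $K$ need not be a subtorus. Condition~(ii) only requires that at each face the relevant subset of normals extends to an integral basis; the full collection $\{\nu_1,\dots,\nu_d\}$ may generate a proper sublattice $\Nn\subsetneq\Z^{n+1}$. Already in Example~\ref{ex:lens} the normals $(0,1)$ and $(p,1)$ span an index-$p$ sublattice and $K\cong\Z/p\Z$. In general $K$ is a closed (possibly disconnected) abelian subgroup with $\pi_0(K)\cong\Z^{n+1}/\Nn$, which by Subsection~\ref{ss:pi1} is precisely $\pi_1(M_C)$. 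This does not break your proof---your free-action check uses only the primitivity of the face normals guaranteed by~(ii), and symplectic reduction by a disconnected compact abelian group works verbatim---but the sentence asserting that $K$ is a subtorus should be weakened to ``closed subgroup of $\T^d$''.
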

The existence part of this theorem follows from an explicit symplectic reduction of the standard euclidean
symplectic cone $(\R^{2d}\setminus\{0\}, \omega_{\rm st}, X_{\rm st})$, where $d$ is the number of defining normals of the
good cone $C\subset\R^{n+1}$, with respect to the action of a subgroup $K\subset\T^d$ induced by the standard
action of $\T^d$ on $\R^{2d}\setminus\{0\} \cong \C^d \setminus\{0\}$. More precisely,
\begin{equation} \label{eq:defK}
K := \left\{[y]\in\T^d \mid \sum_{j=1}^d y_j \nu_j \in 2\pi\Z^{n+1}   \right\}\,,
\end{equation}
where $\nu_1, \ldots, \nu_d \in \Z^{n+1}$ are the defining normals of $C$.

One source for examples of good toric contact manifolds is the prequantization construction over integral closed toric 
symplectic manifolds, i.e. $(M,\xi)$ with $M$ given by the $S^1$-bundle over $(B,\omega)$ with Chern class $[\omega]/2\pi$ and $\xi$ 
being the horizontal distribution of a connection with curvature $\omega$. The corresponding good cones have the form
\[
C:= \left\{z(x,1)\in\R^{n}\times\R\mid x\in P\,,\ z\geq 0\right\}
\subset\R^{n+1}
\]
where $P\subset\R^n$ is a Delzant polytope with vertices in the integer lattice $\Z^n\subset\R^n$.
Note that if
\[
P = \bigcap_{j=1}^d \{x\in\R^{n}\mid \langle x, v_j \rangle + \lambda_j \geq 0\}\,,
\]
with integral $\lambda_1, \ldots, \lambda_d \in \Z$ and primitive $v_1,\ldots, v_d \in \Z^n$, then
the defining normals of $C \subset \R^{n+1}$ are
\[
\nu_j = (v_j, \lambda_j)\,,\ j=1, \ldots, d\,.
\]

\subsection{First Chern class and toric diagrams}
\label{ss:c1diagram}

The Chern classes of a co-oriented contact manifold can be canonically identified with 
the Chern classes of the tangent bundle of the associated symplectic cone.
The vanishing of the first Chern class for good toric symplectic cones can be characterized
in several equivalent ways. Proposition 2.16 in~\cite{AM} does it in terms of the group
$K$ defined by~(\ref{eq:defK}), more precisely in terms of the characters that define its
representation in $\C^d$. The following proposition gives a characterization in terms of
the moment cone that will be more useful for our purposes and is commonly used in toric
Algebraic Geometry (see, e.g., \S $4$ of~\cite{BD}).
\begin{prop} \label{prop:c_1}
Let $(W_C, \omega_C,X_C)$ be a good toric symplectic cone.
Let $\nu_1,\ldots,\nu_d \in \Z^{n+1}$ be the defining normals of the corresponding moment cone 
$C\in\R^{n+1}$. Then $c_1 (TW_C) = 0$ if and only if there exists $\nu^\ast \in (\Z^{n+1})^\ast$ 
such that
\[
\nu^\ast (\nu_j) = 1\,,\ \forall\ j=1,\ldots,d\,.
\]
\end{prop}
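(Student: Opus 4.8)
The plan is to deduce this proposition from the characterization of the vanishing of $c_1(TW_C)$ already obtained in~\cite[Proposition 2.16]{AM}, phrased there in terms of the characters of the $K$-representation on $\C^d$, and then to rewrite that condition combinatorially in terms of the defining normals. First I would fix the linear map $\beta\colon\R^d\to\R^{n+1}$ with $\beta(e_j)=\nu_j$ and let $\beta_\T\colon\T^d\to\T^{n+1}$ be the induced homomorphism of tori, so that by~(\ref{eq:defK}) one has $K=\ker\beta_\T$. Since the defining normals span $\R^{n+1}$, the map $\beta$ is surjective, hence so is $\beta_\T$; this yields a short exact sequence of compact abelian groups
\[
1\to K\to\T^d\xrightarrow{\ \beta_\T\ }\T^{n+1}\to 1\,.
\]

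The next step is to identify the weights of the $K$-action. The standard $\T^d$-action on $\C^d$ has weights $e_1^\ast,\ldots,e_d^\ast\in(\Z^d)^\ast$, so the weights of the restricted $K$-action are $\chi_j=r(e_j^\ast)$, where $r\colon(\Z^d)^\ast\to\hat K$ is the restriction homomorphism on character lattices. I would then invoke~\cite[Proposition 2.16]{AM} in the form asserting that $c_1(TW_C)=0$ precisely when the \emph{total weight} of this representation (equivalently, its determinant character) is trivial, i.e. when $\sum_{j=1}^d\chi_j=r(\mathbf 1)$ vanishes in $\hat K$, writing $\mathbf 1=(1,\ldots,1)=\sum_j e_j^\ast$.

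The final step is purely homological: dualizing the exact sequence of tori above (via Pontryagin duality, which is exact) gives a short exact sequence of character lattices
\[
0\to(\Z^{n+1})^\ast\xrightarrow{\ \beta^\ast\ }(\Z^d)^\ast\xrightarrow{\ r\ }\hat K\to 0\,,
\]
in which $\beta^\ast$ is the pullback of characters, given in coordinates by $\beta^\ast(\nu^\ast)=(\nu^\ast(\nu_1),\ldots,\nu^\ast(\nu_d))$. By exactness, $r(\mathbf 1)=0$ if and only if $\mathbf 1\in\operatorname{im}\beta^\ast$, that is, if and only if there exists $\nu^\ast\in(\Z^{n+1})^\ast$ with $\beta^\ast(\nu^\ast)=\mathbf 1$, which is exactly the stated condition $\nu^\ast(\nu_j)=1$ for all $j$. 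The only delicate points are bookkeeping ones, and I expect them to be the main (minor) obstacle: verifying that $\beta_\T$ is genuinely surjective with kernel exactly $K$—so that the character sequence is exact on the \emph{integral} level rather than merely after tensoring with $\Q$, Pontryagin duality then supplying the integrality for free—and confirming that the normalization conventions of~\cite[Proposition 2.16]{AM} indeed single out the total weight $\sum_j\chi_j$ and not some other combination of the $\chi_j$. Once the exact sequence is in place, the equivalence is immediate.
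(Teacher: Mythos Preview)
The paper does not actually supply a proof of this proposition: it states the result and points to \cite[Proposition~2.16]{AM} (the characterization via the characters of the $K$-representation) and to \cite[\S 4]{BD} for the combinatorial formulation used in toric algebraic geometry. Your proposal is therefore not competing with an existing argument but rather filling in the deduction the paper only gestures at, and it does so correctly: the short exact sequence $1\to K\to\T^d\to\T^{n+1}\to 1$ dualizes under Pontryagin duality to $0\to(\Z^{n+1})^\ast\xrightarrow{\beta^\ast}(\Z^d)^\ast\to\widehat K\to 0$, and the vanishing of the determinant character $r(\mathbf 1)$ is then equivalent, by exactness, to $\mathbf 1\in\operatorname{im}\beta^\ast$, which is exactly the stated condition. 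Your flagged concerns are both harmless here: surjectivity of $\beta_\T$ follows because the $\nu_j$ span $\R^{n+1}$ (the cone is strictly convex of full dimension), and Pontryagin duality then gives exactness on the integral character lattices directly, with no rational detour needed; the identification of the relevant class with the total weight $\sum_j\chi_j$ is indeed the content of \cite[Proposition~2.16]{AM}.
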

By an appropriate change of basis of the torus $\T^{n+1}$, i.e. an appropriate $SL(n+1,\Z)$
transformation of $\R^{n+1}$, this implies the following.
\begin{cor} \label{cor:c_1}
Let $(W_C, \omega_C,X_C)$ be a good toric symplectic cone with $c_1 (TW_C) = 0$. Then there 
exists an integral basis of $\T^{n+1}$ for which the defining normals $\nu_1,\ldots,\nu_d \in \Z^{n+1}$ 
of the corresponding moment cone $C\subset\R^{n+1}$ are of the form
\[
\nu_j = (v_j, 1)\,,\ v_j \in \Z^n\,,\ j=1,\ldots,d\,.
\]
\end{cor}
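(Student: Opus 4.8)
The plan is to reduce the corollary to an elementary lattice statement and then feed in Proposition~\ref{prop:c_1}. A change of torus basis is an $SL(n+1,\Z)$ transformation $A$ of $\R^{n+1}$, and requiring that $A\nu_j = (v_j,1)$ with $v_j \in \Z^n$ is exactly requiring that the last-coordinate functional $e_{n+1}^\ast = (0,\ldots,0,1)$, precomposed with $A$, evaluates to $1$ on every normal; equivalently, that $\nu^\ast := e_{n+1}^\ast \circ A$ is the last row of $A$ and satisfies $\nu^\ast(\nu_j) = 1$ for all $j$. So the whole problem becomes: produce such an $A$. First I would apply Proposition~\ref{prop:c_1}, whose hypothesis $c_1(TW_C) = 0$ is assumed here; it yields $\nu^\ast \in (\Z^{n+1})^\ast$ with $\nu^\ast(\nu_j) = 1$ for every $j = 1,\ldots,d$, which is precisely the functional that must appear as the last row.

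The key step is to check that this $\nu^\ast$ is a \emph{primitive} element of the dual lattice $(\Z^{n+1})^\ast$, since only then can it be completed to a lattice basis. This is forced by the normalization: if $\nu^\ast = k\, w^\ast$ with $k \in \Z$ and $w^\ast \in (\Z^{n+1})^\ast$, then evaluating on any defining normal gives $1 = \nu^\ast(\nu_j) = k\, w^\ast(\nu_j)$ with $w^\ast(\nu_j) \in \Z$, so $k$ divides $1$ and $k = \pm 1$. Hence $\nu^\ast$ is primitive.

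Granting primitivity, I would invoke the standard fact (a consequence of Smith normal form) that a primitive covector in $(\Z^{n+1})^\ast$ occurs as one row of a matrix in $GL(n+1,\Z)$; placing $\nu^\ast$ in the last row produces $A$ with $e_{n+1}^\ast \circ A = \nu^\ast$. Then, in the new basis, each normal $A\nu_j$ has last coordinate $e_{n+1}^\ast(A\nu_j) = \nu^\ast(\nu_j) = 1$, so $A\nu_j = (v_j,1)$ with $v_j \in \Z^n$, as claimed. The only point requiring a word of care — and the closest thing to an obstacle in an otherwise elementary argument — is that the completion of $\nu^\ast$ yields a priori only $A \in GL(n+1,\Z)$, whereas an honest change of torus basis is asked to lie in $SL(n+1,\Z)$; this is resolved by flipping the sign of one of the remaining basis vectors to normalize $\det A = +1$, which does not affect the last row.
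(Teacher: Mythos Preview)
Your proof is correct and is precisely the standard argument the paper alludes to; the paper itself does not give a detailed proof but merely states, just before the corollary, that it follows from Proposition~\ref{prop:c_1} ``by an appropriate change of basis of the torus $\T^{n+1}$, i.e.\ an appropriate $SL(n+1,\Z)$ transformation of $\R^{n+1}$.'' You have simply filled in the details (primitivity of $\nu^\ast$, completion to a lattice basis, sign adjustment for the determinant) that the paper leaves implicit.
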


The next definition and theorem are then the natural analogues for Gorenstein toric contact
manifolds of Definition~\ref{def:good} and Theorem~\ref{thm:good}.
\begin{defn} \label{def:diagram}
A \emph{toric diagram} $D\subset\R^n$ is an integral simplicial polytope with
all of its facets $\Aff(n,\Z)$-equivalent to $\conv(e_1, \ldots, e_n)$, where
$\{e_1,\ldots,e_n\}$ is the canonical basis of $\R^n$.
\end{defn}
\begin{rem} \label{rem:diagram}
The group $\Aff(n,\Z)$ of integral affine transformations of $\R^n$ can be naturally identified
with the elements of $SL(n+1,\Z)$ that preserve the hyperplane $\left\{ (v,1)\mid v\in\R^n \right\} \subset\R^{n+1}$.
\end{rem}
\begin{thm} \label{thm:diagram}
For each toric diagram $D\subset\R^n$ there exists a unique Gorenstein toric contact
manifold $(M_D, \xi_D)$ of dimension $2n+1$.
\end{thm}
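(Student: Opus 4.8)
The plan is to build an explicit dictionary between toric diagrams $D\subset\R^n$ and good cones with vanishing first Chern class, and then quote Theorem~\ref{thm:good}. Given a toric diagram $D$ with vertices $v_1,\ldots,v_d$, I would set $\nu_j:=(v_j,1)\in\Z^{n+1}$ and define the candidate moment cone
\[
C_D:=\bigcap_{j=1}^d\{x\in\R^{n+1}\mid\langle x,\nu_j\rangle\geq0\}\,.
\]
Each $\nu_j$ has last coordinate $1$, hence is primitive, and since $D$ is $n$-dimensional the $v_j$ affinely span $\R^n$, so the $\nu_j$ span $\R^{n+1}$ and $C_D$ contains no line, i.e. is strictly convex. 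Moreover the functional $\nu^\ast(x):=x_{n+1}$ satisfies $\nu^\ast(\nu_j)=1$ for all $j$, so once $C_D$ is known to be good, Proposition~\ref{prop:c_1} gives $c_1(TW_{C_D})=0$ for free. The whole content of the theorem thus reduces to showing that $C_D$ satisfies Definition~\ref{def:good}, after which Theorem~\ref{thm:good} supplies the unique toric symplectic cone, hence the unique Gorenstein $(M_D,\xi_D)$.

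To check Definition~\ref{def:good} I would use the cone duality $C_D=\sigma^\vee$, where $\sigma:=\mathrm{cone}(\nu_1,\ldots,\nu_d)$ is the cone over the copy of $D$ placed at height $1$. Under this duality a $k$-dimensional face of $\sigma$ — equivalently a $(k-1)$-dimensional face of $D$ — corresponds to a codimension-$k$ face of $C_D$, and the facets of $C_D$ containing that face correspond exactly to the vertices of $D$ lying on the matching face of $D$. Reading condition (ii) of Definition~\ref{def:good} through this correspondence, the demand that every codimension-$k$ face be the intersection of \emph{exactly} $k$ facets becomes the statement that every $(k-1)$-face of $D$ has exactly $k$ vertices, i.e. is a simplex; letting $k$ range this is precisely the requirement that $D$ be a simplicial polytope. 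Minimality of the set of normals corresponds to each $v_j$ being a genuine vertex of $D$, which holds because we chose the $v_j$ to be the vertices.

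It remains to match the integral-basis part of Definition~\ref{def:good}(ii) with the condition that each facet of $D$ be $\Aff(n,\Z)$-equivalent to $\conv(e_1,\ldots,e_n)$; this is the step I expect to need the most care. Since every proper face of a polytope is contained in a facet, and any subset of a set that extends to a $\Z$-basis again extends to a $\Z$-basis, it suffices to treat the rays of $C_D$, dual to the facets of $D$. For a facet with vertices $v_{\ell_1},\ldots,v_{\ell_n}$ I would first translate so that $v_{\ell_1}=0$ (a shear fixing the hyperplane $\{x_{n+1}=1\}$, hence in $\Aff(n,\Z)$); then
\[
\langle\nu_{\ell_1},\ldots,\nu_{\ell_n}\rangle=\langle e_{n+1}\rangle\oplus\langle(v_{\ell_2},0),\ldots,(v_{\ell_n},0)\rangle\,,
\]
which extends to a $\Z$-basis of $\Z^{n+1}$ if and only if $v_{\ell_2},\ldots,v_{\ell_n}$ extend to a $\Z$-basis of $\Z^n$, i.e. generate the saturated rank-$(n-1)$ lattice spanned by the edge directions of the facet. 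The elementary lemma that closes the loop is that such a unimodular $(n-1)$-simplex is $\Aff(n,\Z)$-equivalent to the standard simplex: completing $v_{\ell_2},\ldots,v_{\ell_n}$ to a $\Z$-basis and mapping it to $e_1,e_2-e_1,\ldots,e_n-e_1$ yields the required element of $GL(n,\Z)$, the determinant being corrected if necessary by a transposition symmetry of $\conv(e_1,\ldots,e_n)$ to respect the $SL(n+1,\Z)$ normalization of Remark~\ref{rem:diagram}.

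Combining these equivalences shows $C_D$ is a good cone with $c_1=0$, and Theorem~\ref{thm:good} then produces a unique closed toric symplectic cone with moment cone $C_D$, hence the unique Gorenstein toric contact manifold $(M_D,\xi_D)$; this gives existence and uniqueness. Finally, to see that the correspondence is a genuine analogue of Theorem~\ref{thm:good}, I would run the dictionary backwards: an arbitrary Gorenstein $(M,\xi)$ is presented by a good cone $C$ via Theorem~\ref{thm:good}, Corollary~\ref{cor:c_1} normalizes its normals to the form $(v_j,1)$, the functional realizing $c_1=0$ is unique because the $\nu_j$ span, and the same two equivalences force $D=\conv(v_1,\ldots,v_d)$ to be a toric diagram with $C=C_D$. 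The only residual freedom is the stabilizer of $\{x_{n+1}=1\}$ in $SL(n+1,\Z)$, namely $\Aff(n,\Z)$ by Remark~\ref{rem:diagram}, so $D$ is attached to $(M,\xi)$ canonically up to $\Aff(n,\Z)$-equivalence.
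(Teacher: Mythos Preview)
The paper does not give a proof of Theorem~\ref{thm:diagram}; it is stated immediately after Definition~\ref{def:diagram} and Remark~\ref{rem:diagram} as the ``natural analogue'' of Theorem~\ref{thm:good}, and the paper moves directly to examples. Your proposal is therefore not competing with an existing argument but rather supplying the details the paper leaves implicit, and it does so along exactly the intended lines: build the cone $C_D$ on the normals $(v_j,1)$, verify Definition~\ref{def:good}, read off $c_1=0$ from Proposition~\ref{prop:c_1}, and invoke Theorem~\ref{thm:good}.

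The details you give are correct. The face duality between $C_D$ and the cone $\sigma$ over $D$ at height $1$ is the right bookkeeping device, and your reduction of condition~(ii) of Definition~\ref{def:good} to the facets of $D$ (via ``every face sits in a facet, and subsets of extendable sets are extendable'') is the clean way to isolate the unimodularity hypothesis on facets. The translation step and the identification of ``$\{\nu_{\ell_1},\ldots,\nu_{\ell_n}\}$ extends to a $\Z$-basis of $\Z^{n+1}$'' with ``the edge vectors of the facet extend to a $\Z$-basis of $\Z^n$'' are accurate, as is the observation that a transposition symmetry of $\conv(e_1,\ldots,e_n)$ absorbs the possible sign of the determinant when matching $\Aff(n,\Z)$ with the $SL(n+1,\Z)$ normalization of Remark~\ref{rem:diagram}. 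The converse direction you sketch, starting from an arbitrary Gorenstein toric contact manifold and recovering a toric diagram via Corollary~\ref{cor:c_1}, is likewise what the paper has in mind and shows that the assignment $D\mapsto(M_D,\xi_D)$ is a bijection up to $\Aff(n,\Z)$.
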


Here are some examples of toric diagrams and corresponding Gorenstein toric contact manifolds.
\begin{example} \label{ex:lens}
The toric diagram $D = \conv (0, p) \subset \R$, with $p\in\N$, gives $(M_D, \xi_D) \cong (L(p,p-1), \xi_{\rm std})$, i.e.
a $3$-dimensional lens space with contact structure induced from the standard one on $S^3 = L(1,0)$. Figure~\ref{fig1}
a) shows the toric diagram of $L(3,2)$.
\end{example}
\begin{example} \label{ex:standardsphere}
The toric diagram $D = \conv (e_1, \ldots, e_n, \mathbf{0})\subset\R^n$, where $\{e_1, \ldots,e_n\}$ is the canonical
basis of $\R^n$ and $\mathbf{0}\in\R^n$ is the origin, gives $(M_D,\xi_D) \cong (S^{2n+1}, \xi_{\rm std})$, i.e.
the standard contact structure on the $(2n+1)$-sphere. In fact, $(S^{2n+1}, \xi_{\rm std})$ is the prequantization of
$(\CP^n, \omega_{\rm FS})$ with Delzant polytope
\[
P = \bigcap_{j=1}^{n+1} \{x\in\R^{n}\mid \langle x, v_j \rangle + \lambda_j \geq 0\}\,,
\]
where $v_j = e_j\,,\ \lambda_j = 0\,,\ j=1,\ldots,n$, and $v_{n+1} = - (e_1 + \cdots + e_n)\,,\ \lambda_{n+1} = 1$, and
so the corresponding good cone $C\subset\R^{n+1}$ has defining normals
\[
(e_j , 0)\,,\ j=1, \ldots, n\,,\ \text{and}\ (- (e_1 + \cdots + e_n),1)
\]
which are $SL(n+1, \Z)$-equivalent to
\[
(e_j , 1)\,,\ j=1, \ldots, n\,,\ \text{and}\ (\mathbf{0},1)
\]
(since both sets of normals form a $\Z$-basis of $\Z^{n+1}$). Figure~\ref{fig1} b) shows the toric diagram of
$(S^{5}, \xi_{\rm std})$.
\end{example}
\begin{example} \label{ex:quotientsphere}
The toric diagram $D = \conv (e_1, \ldots, e_n, -(e_1 + \cdots + e_n))\subset\R^n$ gives $(M_D, \xi_D) =$
prequantization of $(\CP^n, \omega = (n+1) \omega_{\rm FS} = 2\pi c_1 (\CP^n))$, i.e. a $\Z_{n+1}$-quotient
of $(S^{2n+1}, \xi_{\rm std})$. Figure~\ref{fig1} c) shows the toric diagram of
$(S^{5} / \Z_3, \xi_{\rm std})$.
\end{example}
\begin{example} \label{ex:monotone}
Any monotone Delzant polytope $P\subset\R^n$ with primitive normal vectors
$v_1, \ldots, v_d \in \Z^n \subset \R^n$ determines a toric diagram
$D := \conv(v_1,\ldots, v_d) \subset \R^n$. The corresponding Gorenstein toric contact manifold
$(M_D, \xi_D)$ is the prequantization of the monotone toric symplectic manifold $(B_P, \omega_P)$ 
determined by $P$, with $[\omega_P] = 2\pi c_1 (M_P)$.
\end{example}
\begin{example} \label{ex:S2xS3}
The toric diagram $D = \conv ((0,0), (1,0), (0,1), (p,p))\subset\R^2$, with $p\in\N$, gives
$(M_D,\xi_D) \cong (S^2\times S^3, \xi_{p})$, i.e. a family of contact structures on $S^2 \times S^3$.
This is the family of Remark~\ref{rem:app3}. Figure~\ref{fig1} d) shows the toric diagram of
$(S^2\times S^3, \xi_{3})$.
\end{example}

\begin{figure}[ht]
\includegraphics{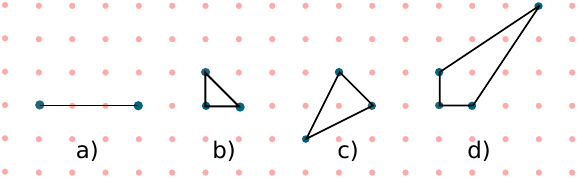}
\centering
\caption{Examples of toric diagrams.}
\label{fig1}
\end{figure}

\subsection{Fundamental group}
\label{ss:pi1}

As shown by Examples \ref{ex:lens} and \ref{ex:quotientsphere}, a Gorenstein toric contact
manifold $(M_D, \xi_D)$ can have nontrivial fundamental group. In fact, it follows from a result of
Lerman~\cite{Le2} that if $D = \conv (v_1, \ldots, v_d)$ then the fundamental group of $M_D$
is the finite abelian group
\[
\Z^{n+1}/\Nn\,,
\]
where $\Nn$ denotes the sublattice of $\Z^{n+1}$ generated by $\{\nu_1=(v_1,1), \ldots, \nu_d=(v_d,1)\}$
Hence, for simply connected Gorenstein toric contact manifolds we have that the $\Z$-span of the set of
defining normals $\{\nu_1, \ldots, \nu_d\}$ is the full integer lattice $\Z^{n+1}\subset\R^{n+1}$. 

\subsection{Normalized toric Reeb vectors}
\label{ss:reeb}

Let $(W,\omega, X)$ be a good toric symplectic cone of dimension $2(n+1)$, with
corresponding closed toric contact manifold $(M,\xi)$. Denote by $\Xx_X (W, \omega)$
the set of $X$-preserving symplectic vector fields on $W$ and by $\Xx (M,\xi)$
the corresponding set of contact vector fields on $M$. The $\T^{n+1}$-action
associates to every vector $\nu \in \R^{n+1}$ a vector field
$R_\nu \in \Xx_X (W,\omega) \cong \Xx (M, \xi)$. We will say that a
contact form $\alpha_\nu \in \Omega^1 (M,\xi)$ is \emph{toric} if
its Reeb vector field $R_{\alpha_\nu}$ satisfies
\[
R_{\alpha_\nu} = R_\nu \quad\text{for some $\nu\in\R^{n+1}$.}
\]
In this case we will say that $\nu\in\R^{n+1}$ is a \emph{Reeb vector}
and that $R_\nu$ is a \emph{toric Reeb vector field}.
The following proposition characterizes which $\nu\in\R^{n+1}$ are Reeb 
vectors of a toric contact form on $(M,\xi)$.

\begin{prop}[{\cite{MSY} or \cite[Proposition 2.19]{AM}}] \label{prop:sasaki}
Let $\nu_1, \ldots, \nu_d \in \R^{n+1}$ be the defining normals of the good moment cone 
$C\in\R^{n+1}$ associated with $(W,\omega,X)$ and $(M,\xi)$. The vector field 
$R_\nu \in \Xx_X (W,\omega) \cong \Xx(M,\xi)$ is the Reeb vector field of a toric contact form 
$\alpha_\nu \in \Omega^1 (M,\xi)$ if and only if
\[
\nu = \sum_{j=1}^d a_j \nu_j \quad\text{with $a_j\in\R^+$ for all
$j=1, \ldots, d$.}
\]
\end{prop}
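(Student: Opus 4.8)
The plan is to reduce the Reeb condition for $R_\nu$ to a positivity condition for a single component of the moment map, and then to read off the answer from polar duality of the moment cone. Throughout I work on the symplectization $(W,\om,X)$ and write $\lambda:=\iota_X\om$ for the Liouville form, so that $d\lambda=\Ll_X\om-\iota_X d\om=\om$. For $\nu\in\R^{n+1}$ I set $\mu^\nu:=\langle\mu,\nu\rangle\colon W\to\R$; since the $\T^{n+1}$-action is $X$-preserving and symplectic it preserves $\lambda$, so $\mu^\nu$ is the moment map of $R_\nu$ for the exact form $\lambda$, giving $\mu^\nu=\lambda(R_\nu)=\iota_{R_\nu}\iota_X\om$ and $\iota_{R_\nu}\om=-d\mu^\nu$. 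The homogeneity $\mu(\rho_t(w))=e^{t}\mu(w)$ translates into the scaling identity $\Ll_X\mu^\nu=\mu^\nu$, which is what makes everything work.

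First I would invoke the standard dictionary between contact forms and the cone: positive contact forms $\alpha$ with $\ker\alpha=\xi$ correspond bijectively to hypersurfaces $M_\alpha\subset W$ transverse to $X$ meeting each $X$-orbit exactly once, via $\alpha=\lambda|_{M_\alpha}$. Suppose first that $\mu^\nu>0$ on $W$. By $\Ll_X\mu^\nu=\mu^\nu$, along each $X$-orbit $\mu^\nu$ equals $e^{t}$ times its value at $t=0$, so $M_\nu:=\{\mu^\nu=1\}$ is such a global section. Putting $\alpha_\nu:=\lambda|_{M_\nu}$, the identity $\lambda(R_\nu)=\mu^\nu$ gives $\alpha_\nu(R_\nu)=1$; moreover $R_\nu\cdot\mu^\nu=-\om(R_\nu,R_\nu)=0$ shows $R_\nu$ is tangent to $M_\nu$, and for $Y\in TM_\nu$ one has $d\alpha_\nu(R_\nu,Y)=\om(R_\nu,Y)=-Y\cdot\mu^\nu=0$. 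Hence $R_\nu$ is the Reeb field of the toric contact form $\alpha_\nu$. Conversely, if $R_\nu=R_\alpha$ for some toric contact form $\alpha=\lambda|_{M_\alpha}$, then on $M_\alpha$ the same identity forces $\mu^\nu=\lambda(R_\nu)=\alpha(R_\alpha)=1$, and since $M_\alpha$ meets every $X$-orbit the scaling identity propagates positivity to all of $W$. This proves the equivalence: $R_\nu$ is a Reeb vector field if and only if $\mu^\nu>0$ on $W\setminus\{0\}$.

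It then remains to translate this positivity into the stated combinatorial condition. Since $\mu(W)=C\setminus\{0\}$, positivity of $\mu^\nu$ on $W\setminus\{0\}$ is equivalent to $\langle y,\nu\rangle>0$ for all $y\in C\setminus\{0\}$, i.e.\ to $\nu$ lying in the interior of the dual cone $C^\vee:=\{\eta\in\R^{n+1}\mid\langle y,\eta\rangle\ge 0\ \forall\,y\in C\}$. Because $C=\bigcap_{j=1}^d\{x\mid\langle x,\nu_j\rangle\ge 0\}$ is strictly convex and full-dimensional, polar duality of polyhedral cones gives $C^\vee=\operatorname{cone}(\nu_1,\dots,\nu_d)$, which is then full-dimensional and pointed. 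Finally, the standard fact that the relative interior of a finitely generated cone consists exactly of the strictly positive combinations of its generators yields $\operatorname{int}(C^\vee)=\{\sum_{j=1}^d a_j\nu_j\mid a_j>0\}$, which is the assertion of the proposition.

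The convex-geometric statements in the last paragraph are routine. The step deserving real care is the contact-geometric dictionary of the second paragraph: one must check, using $\lambda(R_\nu)=\mu^\nu$ and $\Ll_X\mu^\nu=\mu^\nu$ together, that the level set $\{\mu^\nu=1\}$ is simultaneously a global section of $W\to M$ and carries $R_\nu$ as the Reeb field of $\lambda|_{\{\mu^\nu=1\}}$, and that conversely the Reeb condition pins the section down to exactly this level set. Alternatively, one may quote this equivalence from \cite{MSY} or \cite[Proposition 2.19]{AM} and regard the polar-duality computation as the only genuinely new content.
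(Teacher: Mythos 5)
The paper itself gives no proof of this proposition --- it is imported from \cite{MSY} and \cite[Proposition 2.19]{AM} --- and your argument is essentially the standard one from those sources: toric contact forms correspond to level sets $\{\langle\mu,\nu\rangle=1\}$ of the symplectization's moment map, the Reeb condition for $R_\nu$ reduces to positivity of $\langle\mu,\nu\rangle$, and polyhedral duality identifies this with $\nu$ lying in the interior of the cone generated by $\nu_1,\dots,\nu_d$, i.e.\ with $\nu$ being a strictly positive combination of them. Your write-up is correct; the only point you gloss over, namely that $\lambda(R_\nu)=\alpha(R_\alpha)=1$ on $M_\alpha$ in the converse direction, holds even before one knows $R_\nu$ is tangent to $M_\alpha$, because $\lambda(X)=0$ makes $\lambda(R_\nu)$ equal to $\lambda$ applied to the projection of $R_\nu$ onto $TM_\alpha$ along $X$.
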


This motivates the following definition and corollary for toric Reeb vectors on Gorenstein
toric contact manifolds.
\begin{defn} \label{def:reeb}
A \emph{normalized} toric Reeb vector is a toric Reeb vector $\nu\in\R^{n+1}$ of the form
\[
\nu = \sum_{j=1}^d a_j \nu_j \quad\text{with $a_j\in\R^+$ for all
$j=1, \ldots, d$, and}\quad \sum_{j=1}^d a_j = 1\,.
\]
\end{defn}
\begin{cor} \label{cor:reeb}
The interior of a toric diagram $D \subset \R^n$ parametrizes the set of normalized toric 
Reeb vectors on the Gorenstein toric contact manifold $(M_D, \xi_D)$.
\end{cor}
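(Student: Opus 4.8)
The plan is to unwind the two definitions involved and reduce the statement to the elementary convex-geometry fact that the strict convex combinations of a finite point set fill out exactly the relative interior of its convex hull. First I would rewrite a normalized toric Reeb vector in coordinates. By Definition~\ref{def:reeb} such a vector has the form $\nu = \sum_{j=1}^d a_j \nu_j$ with all $a_j \in \R^+$ and $\sum_{j=1}^d a_j = 1$, where, by Corollary~\ref{cor:c_1}, the defining normals are $\nu_j = (v_j, 1)$ and the $v_j$ are the vertices of the toric diagram $D = \conv(v_1, \ldots, v_d)$. Substituting and using the normalization condition gives
\[
\nu = \left( \sum_{j=1}^d a_j v_j,\ \sum_{j=1}^d a_j \right) = (v, 1)\,, \qquad v := \sum_{j=1}^d a_j v_j\,,
\]
so that $v$ is a strict (all coefficients positive) convex combination of the vertices of $D$. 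Conversely, Proposition~\ref{prop:sasaki} tells us that any $\nu = \sum_j a_j \nu_j$ with $a_j > 0$ is a toric Reeb vector, and imposing $\sum_j a_j = 1$ is precisely the condition pinning its last coordinate to $1$, i.e. writing it as $(v,1)$ with $v$ a strict convex combination. Thus normalization corresponds exactly to the slice where the last coordinate equals one.

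Next I would apply the convex-geometry fact quoted above: the set $\{\sum_j a_j v_j : a_j > 0,\ \sum_j a_j = 1\}$ equals $\operatorname{relint}(D)$. Since every facet of $D$ is $(n-1)$-dimensional (Definition~\ref{def:diagram}), the polytope $D$ is full-dimensional in $\R^n$, so $\operatorname{relint}(D)$ coincides with the topological interior $\operatorname{int}(D)$. The assignment $v \mapsto (v,1)$ then maps $\operatorname{int}(D)$ onto the set of normalized toric Reeb vectors, with two-sided inverse given by projecting $\nu = (v,1)$ to $v$; this is the claimed parametrization.

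The only point requiring care---and the closest thing to an obstacle---is the convex-geometry fact itself, which I would either cite or verify in a line (both inclusions are standard, the nontrivial one being that a strict convex combination of the vertices lies in the relative interior). A related subtlety worth flagging is that when $d > n+1$ the coefficients $a_j$ representing a given $v$ need not be unique, so the parametrizing object must be the vector $\nu$ (equivalently the point $v$), not the tuple $(a_1, \ldots, a_d)$; the bijection above is stated at the level of $\nu$ and $v$, where uniqueness is automatic.
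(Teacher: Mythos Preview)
Your proposal is correct and follows essentially the same approach as the paper: both arguments simply substitute $\nu_j = (v_j,1)$ into the definition of a normalized toric Reeb vector to obtain $\nu = (v,1)$ with $v$ a strict convex combination of the vertices of $D$. You are more explicit than the paper about the convex-geometry identification of strict convex combinations with the (relative) interior and about the non-uniqueness of the $a_j$, but this is elaboration rather than a different method.
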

\begin{proof}
If $D = \conv(v_1, \ldots, v_d)$ then $\nu_j = (v_j ,1)$, $j=1, \ldots,d$, and any normalized toric
Reeb vector is of the form
\[
\nu = (v,1) \quad\text{with}\quad v = \sum_{j=1}^d a_j v_j\,,\ a_j\in\R^+\,,\ j=1, \ldots, d\,,\ \text{and}\ 
\sum_{j=1}^d a_j = 1\,.
\]
\end{proof}

\subsection{Parity of contact homology degree of closed toric Reeb orbits}
\label{ss:parity}

Let $(W_C, \omega_C,X_C)$ be a good toric symplectic cone, not necessarily Gorenstein, with 
$\nu_1,\ldots,\nu_d \in \Z^{n+1}$ being the defining normals of the corresponding good moment cone 
$C\in\R^{n+1}$. Denote by $(M_C, \xi_C)$ the corresponding closed toric contact manifold and consider 
a toric Reeb vector field $R_\nu \in \Xx (M_C, \xi_C)$ determined by the toric Reeb vector
\[
\nu = \sum_{j=1}^d a_j \nu_j\,,\ a_j\in\R^+\,,\ j=1, \ldots, d\,.
\]
The toric Reeb flow of $R_\nu$ on $(M_C,\xi_C)$ has at least $m$ simple closed orbits $\gamma_1, \ldots,\gamma_m$, 
corresponding to the $m$ edges $E_1,\ldots,E_m$ of the cone $C$, i.e. one simple closed toric $R_\nu$-orbit for each 
$S^1$-orbit of the $\T^{n+1}$-action on $(M_C,\xi_C)$. When $R_\nu$ is non-degenerate, i.e. when 
$\nu\in\R^{n+1} \equiv \rm{Lie} \, (\T^{n+1})$ generates a dense $1$-parameter subgroup of $\T^{n+1}$, then 
$\gamma_1, \ldots,\gamma_m$ are its only simple closed orbits.

\begin{prop} \label{prop:parity}
When $R_\nu$ is non-degenerate the contact homology degree of $\gamma_j^N$, for any $j=1, \ldots,m$, and any iterate 
$N\in\N$, is even.
\end{prop}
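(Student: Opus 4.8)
The plan is to compute the Conley--Zehnder (equivalently Robbin--Salamon) index of each iterate $\gamma_j^N$ explicitly and show it always lands in an even degree after the dimensional shift. The starting point is that the closed Reeb orbit $\gamma_j$ corresponds to an edge $E_j$ of the moment cone $C$, which is the orbit of a circle subgroup of $\T^{n+1}$ inside the fixed-point locus of the complementary torus action. First I would use the toric symplectic reduction description from Theorem~\ref{thm:good}: the cone $(W_C,\om_C,X_C)$ is obtained from $(\R^{2d}\setminus\{0\},\om_{\rm st},X_{\rm st})$ by reducing along the subgroup $K$ defined in~(\ref{eq:defK}). Since the linearized Reeb flow is just the restriction of a linear torus flow on $\C^d$, its return map along $\gamma_j$ splits as a direct sum of planar rotations whose angles are determined by the weights $a_j$ of $\nu$ against the normals spanning the facets meeting at $E_j$. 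The key point is that in these toric coordinates the linearized flow is already block-diagonalized into two-dimensional rotation blocks.

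Next I would assemble the index from these blocks. For a non-degenerate orbit arising from a rotation by angle $\theta_k(N)$ in each of the $n$ normal planes, the Robbin--Salamon (here honest Conley--Zehnder) index of $\gamma_j^N$ is a sum $\sum_{k=1}^n \big( 2\lfloor \theta_k(N)/2\pi\rfloor + 1\big)$ or a close variant, reflecting that each planar rotation contributes an odd number. Summing $n$ such odd contributions gives a term of parity congruent to $n \pmod 2$. I would then add the dimensional shift used to pass from the Conley--Zehnder index to the contact homology degree; in the standard cylindrical convention the degree is $\mu_{\rm CZ}(\gamma_j^N) + n - 2$, and the parity bookkeeping must be arranged so that the $n$-dependence of the shift exactly cancels the $n$-dependence of the block sum, leaving an even integer. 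Here I would lean on the computation already carried out in~\cite{AM}, invoking Corollary~\ref{cor:c_1} so that all defining normals have last coordinate $1$; the Gorenstein condition $c_1=0$ is precisely what makes the degree a well-defined integer and pins down the shift.

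The main obstacle will be the careful identification of which planar blocks are ``rotation'' blocks contributing to the index and which directions are tangent to the orbit or to the distinguished $\R$- and Reeb-directions, together with controlling the contribution of the non-simple directions for higher iterates $N$. Concretely, one must verify that the $n$ transverse symplectic planes each rotate with a rotation number that is irrational generically (guaranteeing non-degeneracy for all $N$) and compute the floor functions uniformly in $N$; the subtlety is that the parity of the index, not its exact value, is what matters, so I would phrase the argument modulo $2$ and show each block contributes an odd number independent of $N$. Finally I would note that the same computation underlies Theorem~\ref{thm:main}: the rotation numbers $\theta_k$ are exactly the quantities whose reciprocals sum to give the normalized volumes $n!\vol(S_{v,\ell})$, so this parity proposition and the mean-index formula share a common linear-algebra core, and I would set up the block decomposition once in a form reusable for both statements.
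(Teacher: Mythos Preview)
Your approach is different from the paper's, and as written it has a real gap. The proposition is stated for an arbitrary good toric contact manifold, explicitly \emph{not necessarily Gorenstein} (see the paragraph preceding the statement). Your argument, however, invokes Corollary~\ref{cor:c_1} and the explicit Conley--Zehnder formula from~\cite{AM}, both of which require $c_1(\xi)=0$; moreover the lift to $\C^d$ used in that formula exists only for contractible orbits, and you do not explain how to handle the non-contractible ones. So the machinery you want to lean on is simply unavailable at the stated level of generality.

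The paper's proof avoids all of this by a purely local argument. It first observes that the \emph{parity} of the contact homology degree is trivialization-independent (two trivializations differ by twice a Maslov index), so one may compute in any convenient local frame. Then, rather than computing anything, it shows that a $\T^{n+1}$-invariant neighborhood of each $\gamma_j$ is equivariantly contactomorphic to a neighborhood of a closed orbit of a non-degenerate toric Reeb flow on $(S^{2n+1},\xi_{\rm std})$: after an $SL(n+1,\Z)$ change of basis the $n$ facet normals through the edge become $e_1,\dots,e_n$, and one then completes them with a suitable $(n+1)$-st normal so that the Reeb vector lies in the positive span, yielding the standard sphere model. The sphere case (an irrational ellipsoid) is classical. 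This buys you the result with no Gorenstein hypothesis and no contractibility issues, at the cost of giving no information about the actual value of the degree (as the paper notes in Remark~\ref{rem:parity}).

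Your block-by-block parity bookkeeping---each rotation block contributes an odd number, so $\mu_{\rm CZ}\equiv n\pmod 2$, and the shift $n-2$ makes the degree even---is morally correct and is indeed the linear-algebra core of the computation in Section~\ref{s:proof}. To turn it into a proof of the proposition as stated, you would need to drop the appeal to Corollary~\ref{cor:c_1} and instead argue directly that parity is trivialization-independent, so that you may compute the $n$ rotation numbers in a local symplectic frame along the orbit without ever needing a global trivialization or a capping disk. Once you make that move, your argument essentially becomes the sphere-model argument carried out in coordinates.
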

\begin{proof}
The parity of the contact homology degree is independent of the trivialization of the contact structure used to define it.
In fact, the contact homology degree of a non-degenerate closed Reeb orbit is given by the Conley-Zehnder index of its
linearization plus $n-2$, and different trivializations can only change the Conley-Zehnder index by twice the value of the 
Maslov index of an identity based loop in the symplectic linear group $Sp(2n,\R)$. 

Hence, it is enough to show that the contact homology degree is even for some trivialization. We will prove that by showing 
that any $\gamma_j$, $j=1, \ldots,m$, has a $\T^{n+1}$-invariant neighborhood that is $\T^{n+1}$-equivariantly 
contactomorphic to a $\T^{n+1}$-invariant neighborhood of a simple closed orbit of a non-degenerate toric Reeb flow 
on the standard sphere $(S^{2n+1}, \xi_{\rm std})$. The result follows since the parity of the contact homology degree 
of any closed orbit of any such Reeb flow on the standard sphere is even. In fact, any such Reeb flow is just the Hamiltonian
flow on a irrational ellipsoid in $\R^{2(n+1)}$.

Let us denote by $E$ an arbitrary edge of the good moment cone $C\in\R^{n+1}$. Applying an appropriate $SL(n+1,\Z)$ 
change of basis we may assume that the normals to the $n$ facets of $C$ that contain $E$ are the first $n$ vectors of the 
canonical basis of $\R^{n+1}$, which we denote by $\{e_1, \ldots, e_{n+1}\}$, and that $E$ is generated by $e_{n+1}$. 
Write the Reeb vector $\nu$ as
\[
\nu = \sum_{j=1}^n b_j e_j + r  e_{n+1}\,, \ b_1, \ldots,b_n, r \in \R\,.
\]
Strict convexity of $C$ and Proposition~\ref{prop:sasaki} imply that $r > 0$. We can then pick 
\[
m_j \in \Z \quad\text{such that}\quad r_j := b_j - m_j r > 0\,,\ j=1,\ldots,n\,,
\]
and write the Reeb vector $\nu$ as
\[
\nu = \sum_{j=1}^n (r_j + m_j r) e_j + r  e_{n+1}\,, \ r_1, \ldots,r_n, r \in \R^+\,,\ m_1,\ldots,m_n\in\Z\,.
\]
Using again Proposition~\ref{prop:sasaki}, this implies that $\nu$ can be seen as a Reeb vector on the
good moment cone with $n+1$ facets defined by the following set of normals:
\[
\left\{e_1\,,\ \ldots\,,\ e_n\,,\ \sum_{j=1}^n m_j e_j + e_{n+1}\right\}\,.
\]
Since this set of vectors forms an integral basis of $\Z^{n+1}$, the corresponding toric contact manifold
is indeed the sphere $(S^{2n+1}, \xi_{\rm std})$.
\end{proof}

\begin{rem} \label{rem:parity}
Although the above argument is a very simple and direct way to prove Proposition~\ref{prop:parity}, it
does not give any further information regarding the actual value of the contact homology degree, which is
a well defined integer when the first Chern class of the contact structure vanishes. See~\cite[Section 5]{AM} 
for a way to compute that integer value for contractible non-degenerate closed toric Reeb orbits in Gorenstein 
toric contact manifolds, which we will use in Section~\ref{s:proof}.
\end{rem}

\section{Index of closed orbits and trivialization of the contact structure}
\label{s:trivialization}

In this section we will discuss the trivialization of the contact structure that we use to define the index 
of closed orbits. While this trivialization is standard for \emph{contractible} closed orbits, it depends 
on some choices when we consider not contractible periodic orbits. However, as will be explained below, 
it turns out that for Gorenstein toric contact manifolds there is a natural way to get this trivialization, which 
will be very handy for the computation of the mean index in the proof of Theorem \ref{thm:main}.

Let $(M^{2n+1},\xi)$ be a contact manifold, $\alpha$ a contact form supporting $\xi$ and $J$ an almost 
complex structure on $\xi$ compatible with $d\alpha|_\xi$. It is well known that the first Chern class 
$c_1(\xi) \in H^2(M;\Z)$ of the complex vector bundle $(\xi,J)$ does not depend on the choices of $\alpha$ 
and $J$. The same holds for the top complex exterior power $\Lambda_\C^n\xi$ up to complex bundle 
isomorphism.

Suppose that $c_1(\xi)=0$ so that $\Lambda_\C^n\xi$ is a trivial line bundle. Choose a trivialization 
$\tau: \Lambda_\C^n\xi \to M \times \C$ which corresponds to a choice of a non-vanishing section 
$\sec$ of $\Lambda_\C^n\xi$. The choice of this trivialization furnishes a unique way to symplectically 
trivialize the contact structure along periodic orbits of $\alpha$ up to homotopy. As a matter of fact, 
given a periodic orbit $\gamma: S^1 \to M$ of $\alpha$, let $\Phi: \gamma^*\xi \to S^1 \times \C^{n}$ 
be a trivialization of $\xi$ over $\gamma$ as a Hermitian vector bundle such that its highest complex 
exterior power coincides with $\tau$. This condition fixes the homotopy class of $\Phi$: 
given any other such trivialization $\Psi$ we have, for every $t \in S^1$, that $\Phi_t \circ \Psi_t^{-1}: \C^n \to \C^n$ 
has complex determinant equal to one and therefore the Maslov index of the symplectic path 
$t \mapsto \Phi_t \circ \Psi_t^{-1}$ vanishes, where $\Phi_t:=\pi_2 \circ \Phi|_{\gamma^*\xi(t)}$ and 
$\Psi_t:=\pi_2 \circ \Psi|_{\gamma^*\xi(t)}$ with $\pi_2: S^1 \times \C^n \to \C^n$ being the projection 
onto the second factor; cf. \cite{Esp,McL}. Notice that this trivialization is \emph{closed under iterations}, 
that is, the trivialization induced on $\gamma^N$ coincides, up to homotopy, with the $N$-th iterate of the 
trivialization over $\gamma$.

Thus, the choice of a non-vanishing section of $\Lambda_\C^n\xi$ furnishes a way to trivialize the contact structure over 
\emph{any} closed orbit $\gamma$ of $\alpha$. However, if $\gamma$ is \emph{contractible} there is a 
canonical way to trivialize $\gamma^*\xi$ unique up to homotopy. More precisely, consider a capping disk 
$\sigma$ of $\gamma$, that is, a smooth map $\sigma: D^2 \to M$, where $D^2$ is the two-dimensional disk, 
such that $\sigma|_{\partial D^2} = \gamma$. Choose a trivialization of $\sigma^*\xi$ and let 
$\Phi: \gamma^*\xi \to S^1 \times \R^{2n}$ be its restriction to the boundary. Since $D^2$ is contractible, 
the homotopy class of $\Phi$ does not depend on the choice of the trivialization of $\sigma^*\xi$. Moreover, 
the condition $c_1(\xi)=0$ ensures that the homotopy class of $\Phi$ does not depend on the choice of 
$\sigma$ as well.

The trivializations induced by a section $\sec$ of $\Lambda_\C^n\xi$ and a capping disk $\sigma$ coincide 
up to homotopy. Indeed, consider a trivialization $\Phi$ of $\sigma^*\xi$ as a Hermitian vector bundle and 
let $\sec^\Phi_\sigma$ be the section of $\sigma^*\Lambda_\C^n\xi$ induced by the top complex 
exterior power of $\Phi$. Since $D^2$ is contractible, $\sec^\Phi_\sigma$ is homotopic to $\sigma^*\sec$ 
and therefore the corresponding trivializations of $\gamma^*\xi$ are homotopic.

Given a non-vanishing section $\sec$ of $\Lambda_\C^n\xi$, one can define the Robbin-Salamon index $\rs(\gamma;\sec)$ of any closed orbit $\gamma$ in the usual way. More precisely, by the previous discussion $\sec$ induces a unique up to homotopy trivialization $\Phi: \gamma^*\xi \to S^1 \times \R^{2n}$. 
Using this trivialization, the linearized Reeb flow gives the symplectic path
\[
\Gamma(t) = \Phi_t \circ d\phi_\alpha^t(\gamma(0))|_\xi \circ \Phi_0^{-1},
\]
where $\phi^t_\alpha$ is the Reeb flow of $\alpha$. Then the Robbin-Salamon index $\rs(\gamma;\sec)$ 
is defined as the Robbin-Salamon index of $\Gamma$ \cite{RS}. It turns out that if $H^1(M;\Q)=0$ then 
this index does not depend on the choice of $\sec$ since every two such sections are homotopic; 
see \cite[Lemma 4.3]{McL}.

The Robbin-Salamon index coincides with the Conley-Zehnder index if $\gamma$ is non-degenerate 
and the mean index
\[
\Delta (\gamma;\sec) := \lim_{N\to\infty} \frac{\mu_{\rm RS} (\gamma^N;\sec)}{N}
\]
varies continuously with respect to the $C^2$-topology in the following sense: if $\alpha_j$ is a sequence 
of contact forms converging to $\alpha$ in the $C^2$-topology and $\gamma_j$ is a sequence of periodic 
orbits of $\alpha_j$ converging to $\gamma$ then $\Delta(\gamma_j;\sec) \xrightarrow{j\to\infty} \Delta(\gamma;\sec)$ 
\cite{SZ}.

Now, let $(M^{2n+1},\xi)$ be a Gorenstein toric contact manifold and $\alpha$ a toric contact form supporting 
$\xi$. As explained in the previous section, the symplectization $W$ of $M$ can be obtained by symplectic 
reduction of $\C^d$ by the action of a subtorus $K \subset \T^d$, where $d$ is the number of vertices of the 
corresponding toric diagram. Given a contractible closed orbit $\gamma$ of $\alpha$, it is possible to construct 
a Hamiltonian $H: \C^d \to \R$ whose Hamiltonian flow is unitary linear and has a closed orbit $\hat\gamma$ 
that lifts $\gamma$ and satisfies
\[
\rs(\hat\gamma)=\rs(\gamma),
\]
where the index in the left hand side can be computed using the canonical (constant) trivialization in $\C^d$ 
and the index in the right hand side is given by a trivialization using a capping disk; see \cite[Lemma 3.4]{AM}. 
This fact was used in \cite{AM} to compute the cylindrical contact homology of $M$ for \emph{contractible} 
closed orbits.

Choose a non-vanishing section $\sec$ of $\Lambda_\C^n\xi$. By the discussion above, $\sec$ furnishes a unique way, 
up to homotopy, to trivialize $\gamma^*\xi$ for any closed orbit $\gamma$ of $\alpha$. Since $\pi_1(M)$ 
is finite (see Subsection \ref{ss:pi1}) we have that $H^1(M;\Q)=0$ and therefore $\rs(\gamma;\sec)$ does 
not depend on the choice of $\sec$.

In this way, we are able to define the contact homology degree of \emph{every} closed orbit 
(contractible or not) and its computation for contractible orbits reduces to the computation 
of the index of a lifted periodic orbit of a suitable linear unitary Hamiltonian flow on $\C^d$ (this lift exists 
if and only if the closed orbit is contractible). Using the homogeneity of the mean index 
(i.e. $\Delta(\gamma^N)=N \Delta(\gamma)$ for every $N$) and the fact that $\pi_1(M)$ is finite, we can 
use this lift to compute the mean index of any closed orbit (because it is enough to compute the mean index 
of some contractible iterate of $\gamma$) and it will be crucial in the proof of 
Theorem \ref{thm:main} presented in the next section. (It is important here that, as noticed before, 
the trivialization induced by a section is closed under iterations and this property is essential to achieve 
the homogeneity of the mean index.)

\section{Proof of Theorem~\ref{thm:main}}
\label{s:proof}

Given a toric diagram $D = \conv (v_1, \ldots, v_d) \subset \R^n$ and corresponding Gorenstein 
toric contact manifold $(M_D, \xi_D)$, the proof of Theorem~\ref{thm:main} consists of the following
simple steps:
\begin{itemize}
\item[1)] Use the method developed in~\cite[Section 5]{AM} to compute the mean index of 
any closed simple Reeb orbit of any normalized non-degenerate toric Reeb vector field.
\item[2)] Compute the normalized volume of any simplicial pyramid with a normalized toric 
Reeb vector as vertex and a facet of $D$ as base.
\item[3)] Check that the values obtained in 1) and 2) are the same when the orbit of 1)
corresponds to the facet of 2).
\item[4)] Use the continuity of the mean index to conclude the result for possibly degenerate toric contact forms.
\end{itemize}

Consider a toric Reeb vector field $R_\nu \in \Xx (M_D, \xi_D)$ determined by the normalized toric 
Reeb vector
\[
\nu = (v,1) \quad\text{with}\quad v = \sum_{j=1}^d a_j v_j\,,\ a_j\in\R^+\,,\ j=1, \ldots, d\,,\ \text{and}\ 
\sum_{j=1}^d a_j = 1\,.
\]
By a small abuse of notation, we will also write
\[
R_\nu = \sum_{j=1}^d a_j \nu_j \,,
\]
where $\nu_j = (v_j,1)$, $j=1,\ldots,n$, are the defining normals of the associated good moment
cone $C\subset\R^n$.
Making a small perturbation of $\nu$ if necessary, we can assume that
\[
\text{the $1$-parameter subgroup generated by $R_\nu$ is dense in $\T^{n+1}$,}
\]
which means that if $v = (r_1, \ldots,  r_n)$ then the $r_j$'s are irrational and $\Q$-independent.
This is equivalent to the corresponding toric contact form being non-degenerate. 
In fact, as already pointed out in Subsection~\ref{ss:parity},
the toric Reeb flow of $R_\nu$ on $(M_D,\xi_D)$ has exactly $m$ simple closed
orbits $\gamma_1, \ldots,\gamma_m$, all non-degenerate, corresponding to the $m$ edges
$E_1,\ldots,E_m$ of the cone $C$, i.e. one non-degenerate closed simple toric $R_\nu$-orbit
for each $S^1$-orbit of the $\T^{n+1}$-action on $(M_D,\xi_D)$. Equivalently, there is
\[
\text{one non-degenerate closed simple toric $R_\nu$-orbit for each facet of the toric diagram $D$.}
\]

Let $F$ denote one of the facets of $D$, necessarily a simplex, and assume without loss of generality
that its vertices are $v_1, \ldots, v_n \in \Z^n$. Let $\eta\in\Z^n$ be such that
\[
\{ \nu_1 = (v_1,1), \ldots, \nu_n = (v_n,1), (\eta, 1)\} \ \text{is a $\Z$-basis of $\Z^{n+1}$.}
\]
Then $R_\nu$ can be uniquely written as
\[
R_\nu = \sum_{j=1}^n b_j \nu_j + b (\eta,1)\,,\ \text{with}\ b_1,\ldots, b_n\in \R\ \text{and}\ 
b = 1 - \sum_{j=1}^n b_j \ne 0\,.
\]
Let $k\in\N$ be the order in the fundamental group of $M_D$ of the non-degenerate closed simple toric 
$R_\nu$-orbit $\gamma$ determined by $F$ (that is, $k$ is the smallest positive integer such that $\gamma^k$ is contractible). Then the Conley-Zehnder index of $\gamma^{kN}$, for any 
$N\in\N$, is given by (see~\cite[Section 5]{AM})
\[
\mu_{\rm CZ} (\gamma^{kN}) = 2 \left( \sum_{j=1}^n \left\lfloor kN \frac{b_j}{|b|}\right\rfloor
 + kN \frac{b}{|b|}\right) + n\,.
\]
Here we are taking a trivialization of $\xi_D$ induced by a non-vanishing section of $\Lambda^n_\C\xi_D$ as discussed in Section \ref{s:trivialization}. Hence, for the mean index of $\gamma^k$ we have that
\[
\Delta (\gamma^k) = \lim_{N\to\infty} \frac{\mu_{\rm CZ} (\gamma^{kN})}{N} =
2 \left( \sum_{j=1}^n  k\frac{b_j}{|b|} + k \frac{b}{|b|} \right) = \frac{2k}{|b|}\,,
\]
which implies that
\[
\Delta (\gamma) = \frac{2}{|b|},
\]
where we are using the homogeneity of the mean index (see Section \ref{s:trivialization}).
\begin{rem} \label{rem:positive}
This shows that the mean index of any closed orbit of any normalized non-degenerate toric Reeb vector field
is positive. This implies that $\inf \{j\in\Z \mid cb_j (M,\alpha) \ne 0\}$ is finite for
any non-degenerate toric contact form $\alpha$, as claimed in Remark~\ref{rem:meanEuler}.
\end{rem}

Let us now prove that
\[
\frac{1}{\Delta (\gamma)} = \frac{n! \vol ( S_{v})}{2}\,,\ \text{i.e.}\ n! \vol ( S_{v}) = |b|\,,
\]
where $S_{v} = \conv (v, v_1, \ldots, v_n)$. Using $\Aff(n,\Z)$-invariance, we may assume without 
loss of generality that
\[
\eta = 0 \ \text{and}\ v_1 = e_1, \ldots, v_n = e_n\,,\ \text{where}\ \{e_1, \ldots, e_n\}\ \text{is the standard 
basis of $\R^n$.}
\]
In this case we have that $v= (b_1, \ldots, b_n)$ and so
\[
n! \vol (S_v) = |\det (I_n - A_v)| \,,
\]
where $I_n$ is the $(n\times n)$ identity matrix and $A_v$  is the $(n\times n)$ matrix with all of its 
columns given by $v \in\R^n$. A simple determinant computation then gives 
\[
|\det (I_n - A_v)| = |1-\sum_{j=1}^n b_j | = |b|\,.
\]
Finally, using the continuity of the mean index (see Section \ref{s:trivialization}) we easily conclude the proof of the Theorem when $R_\nu$ is degenerate.

\section{Examples of crepant toric symplectic fillings}
\label{s:resolutions}

In this section we illustrate Theorem~\ref{thm:main3} with some families of examples of crepant toric symplectic 
fillings of Gorenstein toric contact manifolds and show that the real projective spaces $(\rp^{4n+3}, \xi_{\rm std})$, 
$n\in\N$, are examples of Gorenstein toric contact manifolds with no crepant toric symplectic filling. All these
correspond to examples of isolated toric Gorenstein singularities well known in Algebraic Geometry. To check that 
the Euler characteristic of a filling is indeed equal to the normalized volume of the toric diagram, recall the following 
well known fact: the Euler characteristic of a toric symplectic manifold is equal to the number of vertices of the 
corresponding moment map image.

The first family of examples is given by the $3$-dimensional lens spaces $(L(p,p-1), \xi_{\rm std}), p\in\N$ 
(cf. Example~\ref{ex:lens}). The corresponding toric diagrams are $D = \conv (0, p) \subset \R$ with normalized
volume equal to $p$ and the moment map image of the corresponding symplectizations is a good cone in
$\R^2$ with defining normals $(0,1)$ and $(p,1)$. The moment map image of a crepant toric symplectic filling is
a convex polyhedral set in $\R^2$ with primitive interior normals to its edges given by $(0,1)$, $(1,1)$, \dots, $(p-1,1)$,
$(p,1)$. See Figure~\ref{fig6} for the case $p=3$. The Euler characteristic of this filling is indeed equal to $p$. 

\begin{figure}[ht]
\includegraphics{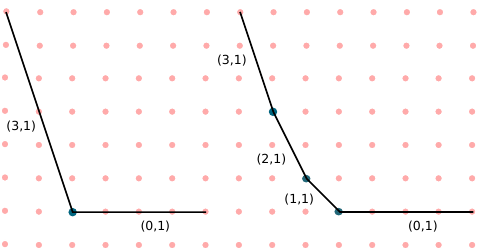}
\centering
\caption{Moment map images of the symplectization of $(L(3,2), \xi_{\rm std})$ on the left and of its crepant toric symplectic
filling on the right. Each label indicates the primitive interior normal to the corresponding edge.}
\label{fig6}
\end{figure}

The second family of examples is given by the prequantization of monotone toric symplectic manifolds, as described
in Example~\ref{ex:monotone}. Here a crepant toric symplectic filling is obtained by blowing-up the Gorenstein toric
isolated singularity. In the moment map image, this corresponds to cutting the good cone at level one with a horizontal
facet. See Figure~\ref{fig7} for the case where the basis of the prequantization is 
$(\CP^2, \omega = 3 \omega_{\rm FS} = 2\pi c_1 (\CP^2))$ (cf. Example~\ref{ex:quotientsphere}).
In all these prequantization examples, one can use an elementary simplicial subdivision argument  to show that the 
normalized volume of the toric diagram is equal to the number of its facets. That is also the number of vertices 
of the moment polytope of the basis, which is the Euler characteristic of the basis and of the crepant toric symplectic filling.

\begin{figure}[ht]
\includegraphics{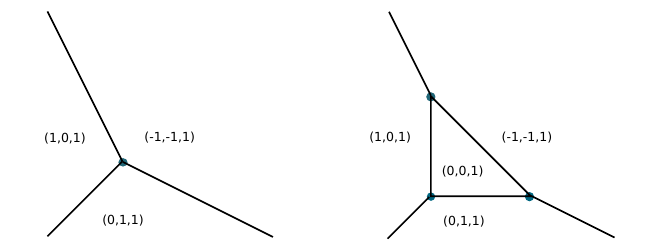}
\centering
\caption{Moment map images of the symplectization of $(S^5/\Z_3, \xi_{\rm std})$ on the left and of its crepant toric symplectic
filling on the right. The viewer is on the inside of the moment map images and each label indicates the primitive interior normal 
to the corresponding facet.}
\label{fig7}
\end{figure}

The third and final family of examples is given by the Gorenstein toric contact structures of Example~\ref{ex:S2xS3}, i.e.
$(S^2\times S^3, \xi_{p}),\, p\in\N$. The corresponding toric diagrams are $D = \conv ((0,0), (1,0), (0,1), (p,p))\subset\R^2$ 
with normalized volume equal to $2p$ and the moment map image of the corresponding symplectizations is a good cone in
$\R^3$ with defining normals $(0,0,1), (1,0,1),(0,1,1)$ and $(p,p,1)$. The moment map image of a crepant toric symplectic filling is
a convex polyhedral set in $\R^3$ with primitive interior normals to its edges given by $(0,0,1), (1,0,1),(0,1,1)$, $(1,1,1)$, \dots, $(p-1,p-1,1)$,
$(p,p,1)$. See Figure~\ref{fig9} for the case $p=3$. The Euler characteristic of this filling is indeed equal to $2p$. 

\begin{figure}[ht]
\includegraphics{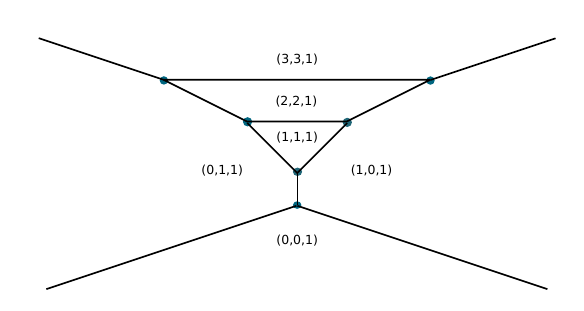}
\centering
\caption{Moment map image of a crepant toric symplectic filling of $(S^2\times S^3, \xi_3)$.
The viewer is on the inside of the moment map image and each label indicates the primitive interior normal 
to the corresponding facet.}
\label{fig9}
\end{figure}

In Algebraic Geometry, the crepant toric resolutions of isolated toric Gorenstein singularities are obtained via
regular simplicial subdivisions of the corresponding toric diagrams. The above examples correspond to obvious
regular subdivisions of their toric diagrams. Figure~\ref{fig8} illustrates that correspondence for the 
crepant toric symplectic fillings of Figures~\ref{fig7} and~\ref{fig9}. Note that the interior integral points determine 
the defining normals to the facets that are added to the moment cone for the construction of the crepant toric symplectic 
filling.

\begin{figure}[ht]
\includegraphics{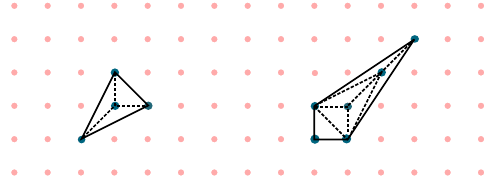}
\centering
\caption{Regular simplicial subdivisions of the toric diagrams of $(S^5/\Z_3, \xi_{\rm std})$ (left) and $(S^2\times S^3, \xi_3)$ (right),
corresponding to the crepant toric symplectic fillings of Figures~\ref{fig7} and~\ref{fig9}.}
\label{fig8}
\end{figure}

It is well known and easy to see that every toric diagram in $\R$ and $\R^2$ has a regular simplicial subdivision. The
associated crepant toric resolution of the corresponding isolated toric Gorenstein singularity can then be used to show
that all Gorenstein toric contact manifolds of dimensions $3$ and $5$ have crepant toric symplectic fillings. That is no
longer the case in higher dimensions. In fact, real projective spaces $(\rp^{4n+3}, \xi_{\rm std})$, $n\in\N$, already provide
examples of Gorenstein toric contact manifolds with no crepant toric symplectic filling. Although this is just a 
consequence of the fact that the origin in $(\C^{2(n+1)} / \pm 1)$, $n\in\N$, is an example of an isolated Gorenstein 
singularity with no crepant resolution, let us give an argument in the spirit of this paper, i.e. using toric diagrams.

Similarly to Example~\ref{ex:standardsphere}, $(\rp^{2n+1}, \xi_{\rm std})$ can be described as the prequantization of 
$(\CP^{n}, 2\omega_{\rm FS})$. The corresponding good moment cone $C\subset \R^{n+1} =\R^n \times \R$ has defining normals
\[
(e_j , 0)\,,\ j=1,\ldots,n \quad\text{and}\quad (-(e_1 + \cdots + e_n) , 2)\,,
\]
where $\{e_1, \ldots, e_n\}$ denotes the canonical basis of $\R^n$. Using Proposition~\ref{prop:c_1} one can show that
\[
c_1 (\rp^{2n+1}, \xi_{\rm std}) = 0 \Leftrightarrow n\ \text{odd.}
\]
Hence, any  $(\rp^{4n+3}, \xi_{\rm std})$, $n\in\N_0$, is a Gorenstein toric contact manifold. In fact, in these dimensions
the above set of normals is $SL(2(n+1),\Z)$-equivalent to
\[
(\mathbf{0},1)\,,\ (e_j , 0,1)\,,\ j=1, \ldots, 2n\,,\ \text{and}\ (e_1 + \cdots + e_{2n},2,1)\,,
\]
where $\mathbf{0} \in \R^{2n+1}$ and $\{e_1, \ldots, e_{2n}\}$ denotes the canonical basis of $\R^{2n}$.

When $n=0$ this is just $(\rp^3, \xi_{\rm std})$ seen as the lens space $(L(2,1), \xi_{\rm std})$ with 
toric diagram $D = \conv (0, 2) \subset \R$. As we have seen, it has a crepant toric symplectic filling. The
corresponding crepant resolution is obtained from the regular subdivision of its toric diagram that comes 
from having an interior integral point: $D = \conv (0, 2) = \conv (0,1) \cup \conv (1,2)$. 

In higher dimensions,
i.e. when $n>0$, the toric diagram
\[
D = \conv (\mathbf{0}, (e_1, 0), \ldots, (e_{2n},0), (e_1 + \cdots + e_{2n},2))\subset \R^{2n}\times \R = \R^{2n+1}
\]
has no interior integral points. Hence, the corresponding isolated toric Gorenstein singularity has no crepant
toric resolution, which implies that $(\rp^{4n+3}, \xi_{\rm std})$, $n\in\N$, has no crepant toric symplectic
filling. In other words, there are no normals one could use to define additional facets to the moment cone for
the construction of a toric symplectic filling that would be both smooth and with zero first Chern class.

\section{Simply-connected Gorenstein toric contact structures in dimension $5$}
\label{s:dim5}

In this section we give one explicit description for the families of contact structures that one needs
to construct in order to prove Corollary~\ref{cor:app3}. First let us recall that the diffeomorphism type
of a simply-connected Gorenstein toric $5$-manifold is completely determined by the number of
vertices of its toric diagram.
\begin{prop}[{\cite[Theorem 5.5]{O}}] \label{prop:diif5}
Let $(M,\xi)$ be a simply-connected Gorenstein toric contact $5$-manifold determined by a toric
diagram $D\subset\R^2$ with $d$ vertices. Then $M$ is diffeomorphic to $S^5$ when $d=3$ and to
\[
\#_{d-3} S^2 \times S^3 \quad\text{when}\quad d>3\,.
\]
\end{prop}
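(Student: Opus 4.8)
The plan is to identify $M$ via the Smale--Barden classification of closed simply-connected smooth $5$-manifolds, which says that such a manifold is determined up to diffeomorphism by its second integral homology group $H_2(M;\Z)$ together with the second Stiefel--Whitney class $w_2(M)$. Concretely, a closed simply-connected \emph{spin} $5$-manifold with torsion-free $H_2$ of rank $k$ is diffeomorphic to $\#_k S^2\times S^3$ (with the convention that $k=0$ yields $S^5$). Thus it suffices to show that $M$ is spin and that $H_2(M;\Z)\cong\Z^{d-3}$.

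For the homology computation I would invoke Lerman's result quoted in Subsection~\ref{ss:pi1}: the second homotopy group of a good toric contact manifold of dimension $2n+1$ is free abelian of rank $d-n-1$. With $n=2$ this gives $\pi_2(M)\cong\Z^{d-3}$. Since $M$ is simply connected, the Hurewicz theorem yields $H_2(M;\Z)\cong\pi_2(M)\cong\Z^{d-3}$, which is in particular torsion-free. When $d=3$ this group is trivial, so $M$ is a simply-connected homology $5$-sphere, hence diffeomorphic to $S^5$; when $d>3$ the rank is $d-3>0$ and we land among the nontrivial connected sums.

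To check that $M$ is spin I would use that a contact form $\alpha$ supplies a splitting $TM\cong\xi\oplus\underline{\R}$, where $\underline{\R}$ is the trivial line bundle spanned by the Reeb vector field. Hence $w_2(M)=w_2(\xi)$, and since $(\xi,J)$ is a complex vector bundle its second Stiefel--Whitney class is the mod-$2$ reduction of $c_1(\xi)$. As $(M,\xi)$ is Gorenstein we have $c_1(\xi)=0$, so $w_2(M)=0$ and $M$ is spin. Combining this with the homology computation and the Smale--Barden classification gives the claimed diffeomorphism type in both cases.

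The main obstacle is not any single computation --- each of the three ingredients (Lerman's rank formula, the Hurewicz isomorphism, and the vanishing of $w_2$) is routine once assembled --- but rather the need to invoke the full strength of the Smale--Barden classification and, in particular, to know that we land in the torsion-free case. It is precisely Lerman's statement that $\pi_2$ is \emph{free} abelian (rather than merely of a prescribed rank) that rules out torsion in $H_2$ and thereby pins down the connected-sum factors unambiguously.
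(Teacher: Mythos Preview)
Your argument is correct. The paper itself offers no proof of this proposition: it simply records the statement as a citation of Oh's classification \cite[Theorem 5.5]{O} of simply-connected $5$-manifolds admitting an effective $\T^3$-action, and then moves on to examples. So there is no ``paper's proof'' to compare against beyond that citation.

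Your route is genuinely different and, in a sense, more internal to the paper: instead of appealing to Oh's equivariant classification, you compute the Smale--Barden invariants directly using ingredients already present in the text. Lerman's rank formula for $\pi_2$ (quoted in the introduction and in Subsection~\ref{ss:pi1}) plus Hurewicz gives $H_2(M;\Z)\cong\Z^{d-3}$, and the Gorenstein hypothesis $c_1(\xi)=0$ forces $w_2(TM)=w_2(\xi)=c_1(\xi)\bmod 2=0$ via the splitting $TM\cong\xi\oplus\underline{\R}$. Smale's classification of simply-connected spin $5$-manifolds then finishes. The advantage of your approach is that it avoids importing Oh's equivariant result and makes transparent exactly where each hypothesis (simply-connectedness, the Gorenstein condition, the count of vertices) enters; the advantage of the paper's citation is brevity and the fact that Oh's theorem handles the toric setting directly without needing to verify the spin condition separately.
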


A good moment cone for a toric contact structure on $S^5$ has $3$ normals that form a $\Z$-basis
of $\Z^3$. Hence it is $SL(3,\Z)$ equivalent to the good moment cone with normals
\[
(0,0,1)\,,\ (1,0,1)\quad\text{and}\quad (0,1,1)\,,
\]
and gives the standard contact structure on $S^5$. Its toric diagram is the unimodular simplex with
vertices $(0,0)$, $(1,0)$ and $(0,1)$, shown in Figure~\ref{fig1} b). Its normalized volume is $1$ and 
we recover the well known fact that
\[
\Chi (S^5, \xi_{\rm st}) = \frac{1}{2}\,.
\]

A minimal volume toric diagram with $4$ vertices is the square with vertices $(0,0)$, $(1,0)$, $(0,1)$
and $(1,1)$. It determines the standard contact structure on $S^2\times S^3$, as the unit cosphere
bundle of $S^3$, with mean Euler characteristic equal to $1$. The family of $4$-gons with vertices
\[
(0,0)\,,\ (1,0)\,,\ (0,1)\quad\text{and}\quad (p,p)\,,\ p\in\N\,,
\]
gives a family of inequivalent contact structures $\xi_p$ on $S^2\times S^3$ with
\[
\Chi (S^2\times S^3, \xi_p) = p \,.
\]
See Example~\ref{ex:S2xS3} and Figure~\ref{fig1} d).

A minimal volume toric diagram with $5$ vertices is the $5$-gon with vertices $(-1,0)$, $(0,-1)$, $(1,0)$, $(0,1)$
and $(1,1)$. It determines a contact structure on $\#_ 2 S^2\times S^3$ with mean Euler characteristic equal 
to $5/2$. The family of $5$-gons with vertices
\[
(0,-1)\,,\ (-1,0)\,,\ (1,0)\,,\ (0,1)\quad\text{and}\quad (p,p)\,,\ p\in\N\,,
\]
gives a family of inequivalent contact structures $\xi_p$ on $\#_2 S^2\times S^3$ with
\[
\Chi (\#_2 S^2\times S^3, \xi_p) = \frac{3}{2} + p\,.
\]

A minimal volume toric diagram with $6$ vertices is the $6$-gon with vertices $(-1,-1)$, $(0,-1)$, $(-1,0)$, $(0,1)$,
$(1,0)$ and $(1,1)$ (see Figure~\ref{fig2} left). It determines a contact structure on $\#_ 3 S^2\times S^3$ with mean Euler characteristic equal 
to $3$. The family of $6$-gons with vertices
\[
(-1,-1)\,,\ (0,-1)\,,\ (-1,0)\,,\ (1,0)\,,\ (0,1)\quad\text{and}\quad (p,p)\,,\ p\in\N\,,
\]
gives a family of inequivalent contact structures $\xi_p$ on $\#_3 S^2\times S^3$ with
\[
\Chi (\#_3 S^2\times S^3, \xi_p) = 2 + p\,.
\]
See Figure~\ref{fig2} right for this toric diagram with $p=4$. Note that, as indicated in Figure~\ref{fig2}, clipping off the vertex $(-1,-1)$ in this family of $6$-gons gives the family of $5$-gons considered above.

\begin{figure}[ht]
\includegraphics{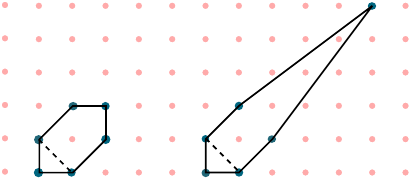}
\centering
\caption{Examples of toric diagrams with $6$ vertices. The one on the left has minimal volume.
The dashed segment indicates that clipping off the 
bottom left vertex gives examples of toric diagrams with $5$ vertices.}
\label{fig2}
\end{figure}

A minimal volume toric diagram with $8$ vertices is the $8$-gon with vertices $(0,0)$, $(1,0)$, $(0,3)$, $(1,3)$,
$(-1,1)$, $(-1,2)$, $(2,1)$ and $(2,2)$ (see Figure~\ref{fig3} left). It determines a contact structure on 
$\#_ 5 S^2\times S^3$ with mean Euler characteristic equal to $7$. The family of $8$-gons with vertices
\[
(0,0)\,,\ (1,0)\,,\ (0,3)\,,\ (1,3)\,,\ (-1,1)\,,\ (-1,2)\,,\ (1+p,1)\quad\text{and}\quad (1+p,2)\,,\ p\in\N\,,
\]
gives a family of inequivalent contact structures $\xi_p$ on $\#_5 S^2\times S^3$ with
\[
\Chi (\#_5 S^2\times S^3, \xi_p) = 5 + 2p\,.
\]
See Figure~\ref{fig3} right for this toric diagram with $p=4$. Note that, as indicated in Figure~\ref{fig3}, 
clipping off the vertex $(-1,1)$ in this family of $8$-gons gives a family of $7$-gons that determine a 
family of inequivalent contact structures $\xi_p$ on $\#_4 S^2\times S^3$. The $7$-gon on the left of 
Figure~\ref{fig3} also has minimal volume.

\begin{figure}[ht]
\includegraphics{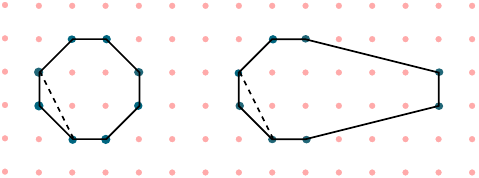}
\centering
\caption{Examples of toric diagrams with $8$ vertices. The one on the left has minimal volume.
The dashed segment indicates how to obtain
examples of toric diagrams with $7$ vertices by clipping off one vertex.}
\label{fig3}
\end{figure}

One possibility to systematically increase the number of vertices is the following (see Figure~\ref{fig4}).
For each $k\in\N$ consider the family of $(4k+4)$-gons, parametrized by $p\in\N$, with vertices
\[
(0,0)\,,\ (1,0)\,,\ (0, 2k+1)\,,\ (1, 2k+1)\,,
\]
\[
\left(-\frac{(k-j)(k+j+1)}{2}, k-j \right)\,,\ \left(-\frac{(k-j)(k+j+1)}{2}, k+j+1\right)\,,
\]
\[
\left(\frac{(k-j)(k+j+1)}{2} + p, k-j\right)\,,\ \left(\frac{(k-j)(k+j+1)}{2} + p, k+j+1\right)\,,\ j = 0, \ldots, k-1\,.
\]
It gives a family of inequivalent contact structures $\xi_p$ on $\#_{4k+1} S^2\times S^3$ with
\[
\Chi (\#_{4k+1} S^2\times S^3, \xi_p) = 2k+1 + \frac{2k(k+1)(2k+1)}{3} + (p-1)2k\,.
\]
Clipping for example the vertex $(0,0)$ in this family, one gets a family of $(4k+3)$-gons that determine 
a family of inequivalent contact structures $\xi_p$ on $\#_{4k} S^2\times S^3$ with
\[
\Chi (\#_{4k} S^2\times S^3, \xi_p) = 2k+\frac{1}{2} + \frac{2k(k+1)(2k+1)}{3} + (p-1)2k\,.
\]

\begin{figure}[ht]
\includegraphics{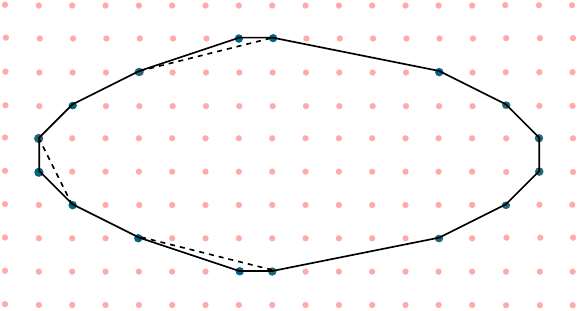}
\centering
\caption{Example of a toric diagram with $16$ vertices, corresponding to $k=p=3$. The dashed segments indicate 
how to obtain examples of toric diagrams with $15$, $14$ and $13$ vertices by clipping off one, two or three vertices.}
\label{fig4}
\end{figure}

For each $k>1$ we can clip one or two more vertices in this family of $(4k+4)$-gons, for example the 
vertices with coordinates $(0, 2k+1)$ and $(-k(k+1)/2, k)$, as shown in Figure~\ref{fig4} when $k=p=3$. This
gives families of $(4k+2)$-gons and $(4k+1)$-gons that determine families of inequivalent contact structures 
$\xi_p$ on $\#_{4k-1} S^2\times S^3$ and $\#_{4k-2} S^2\times S^3$ with
\[
\Chi (\#_{4k-1} S^2\times S^3, \xi_p) = 2k + \frac{2k(k+1)(2k+1)}{3} + (p-1)2k
\]
and
\[
\Chi (\#_{4k-2} S^2\times S^3, \xi_p) = 2k - \frac{1}{2} + \frac{2k(k+1)(2k+1)}{3} + (p-1)2k\,.
\]

When $k=p=1$ the corresponding $7$-gon and $8$-gon are the ones in Figure~\ref{fig3} left and 
have minimal volume. That is no longer the case when $k>1$.


For the record, a minimal volume toric diagram with $9$ vertices is the $9$-gon with vertices 
$(-2,-3)$, $(-3,-2)$, $(-1,-3)$, $(-3,-1)$, $(0,-2)$, $(-2,0)$, $(1,0)$, $(0,1)$ and $(1,1)$ (see Figure~\ref{fig5} left).
It determines a contact structure on $\#_ 6 S^2\times S^3$ with mean 
Euler characteristic equal to $21/2$. The family of $9$-gons with vertices
\[
(-2,-3),\ (-3,-2),\ (-1,-3),\ (-3,-1),\ (0,-2),\ (-2,0),\ (p,p-1),\ (p-1,p),\  (p,p),\ p\in\N,
\]
gives a family of inequivalent contact structures $\xi_p$ on $\#_6 S^2\times S^3$ with
\[
\Chi (\#_6 S^2\times S^3, \xi_p) = \frac{9}{2} + 3(1 + p)\,.
\]
See Figure~\ref{fig5} right for this toric diagram with $p=3$.

\begin{figure}[ht]
\includegraphics{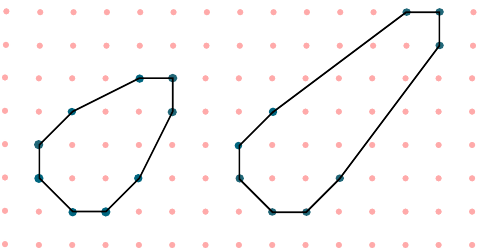}
\centering
\caption{Examples of toric diagrams with $9$ vertices. The one on the left has minimal volume.}
\label{fig5}
\end{figure}

\newpage

\end{document}